\newcommand{\R}{\mathbb{R}}
\newcommand{\inner}[2]{\langle{#1},{#2}\rangle}
\newcommand{\tos}{\rightrightarrows} 
\newtheorem{theorem}{Theorem}[section]
\newtheorem{lemma}[theorem]{Lemma}
\newtheorem{corollary}[theorem]{Corollary}
\newtheorem{remark}[theorem]{Remark}
\newtheorem{proposition}[theorem]{Proposition}
\begin{document}

\title{Stability of a Regularized Newton Method with two Potentials }

\author{Boushra ABBAS\\
\\
ACSIOM, I3M UMR CNRS 5149, Universit\'e Montpellier 2}

\date { May 20, 2015}

\begin{abstract}
  \noindent In a Hilbert space setting, we study the stability properties of  the regularized continuous Newton method with two potentials,  which aims at solving inclusions governed by structured monotone operators. The  Levenberg-Marquardt regularization term acts in an open loop way. As a byproduct of our study, we can take the regularization coefficient of bounded variation. These stability results are directly related to the study of numerical algorithms that combine forward-backward and Newton's methods.

 \end{abstract}
 
 \maketitle
 
 \bigskip

\pagestyle{plain}


\section{{\large Introduction }}

Throughout this paper $H$ is  a real Hilbert space with  scalar product $\left\langle \cdot,\cdot\right\rangle $   
and norm $\left\Vert x\right\Vert =\sqrt{\left\langle x,x\right\rangle }$
for $x\in H$.  We make the following standing assumptions:  $\varphi$ and $\psi$ are two functions which act on $H$ and satisfy

\smallskip

$\bullet$  $\varphi:H\longrightarrow\mathbb{R}\cup\left\{ +\infty\right\} $ is  convex lower semicontinuous, and proper;

$\bullet$ $\psi:H\longrightarrow\mathbb{R}$ is  convex differentiable, and  $\nabla\psi$ is
 Lipschitz continuous on the bounded subsets of $H$.
 
$\bullet$  $\varphi+\psi$ is bounded from below on $H.$
 
 \smallskip

\noindent We are concerned with the study of Newton-like continuous and discrete dynamics attached to solving the  the structured minimization problem
$$
(\mathcal P ) \quad  \min \left\{  \varphi (x) + \psi (x): \quad x\in H   \right\}.
$$

Note the asymmetry between $\varphi$, which may be nonsmooth, with extended real values, and $\psi$ which is continuously differentiable, whence the structured property of the above problem. Indeed, we wish to design continuous and discrete dynamics which exploit this particular structure and involve $\varphi$ via implicit operations (like resolvent or proximal operators) and $\psi$ via explicit operations (typically gradient-like methods).
So doing we expect obtening new forward-backward splitting methods involving Riemannian metric aspects, and which are close to the Newton method.
This approach has been delineated in a series of recent papers, \cite{AA}, \cite{AAS}, \cite{ARS}, \cite{AS}.
In this paper we are concerned with the stability properties with respect to the data ($\lambda$, $x_{0}$, $\upsilon_{0}$, ...) of the strong solutions of the differential
inclusion 
\begin{subeqnarray} \label{basic}
& & v(t)\in \partial \varphi(x(t)) \slabel{basica} \\
& & \lambda(t)\dot x(t)+\dot v(t)+v(t)+ \nabla \psi(x(t))=0  \slabel{basicb} \\
& & x\left(0\right)=x_{0},\upsilon\left(0\right)=\upsilon_{0}.
\slabel{basicc} 
\end{subeqnarray}
Let us now make our standing assumption on  function $\lambda(\cdot)$: 
\begin{align}
\label{ac-lambda}
&   \lambda: [0,+\infty[ \longrightarrow \  ]0,+\infty [  \  \mbox{is   continuous,}\\
 & \mbox{ and absolutely continuous on each  interval} \ [0,b],  \   0<b< +\infty.
\end{align}
Hence $\dot{\lambda}(t)$ exists for almost every $t>0$, and $\dot{\lambda}(\cdot)$ is Lebesgue integrable on each bounded interval $\left[0,b\right]$.
 We stress the fact that we assume  $\lambda (t) > 0$, \  for any $t \geq 0$. By continuity of $\lambda(\cdot)$, this implies that, for any 
$b>0$, there exists some positive finite lower and upper bounds for $\lambda(\cdot)$ on $\left[0, b \right]$, i.e.,  for any $t \in \left[0, b \right]$
\begin{equation}
\label{localbound}
0 < {\lambda}_{b, min} \leq \lambda (t) \leq {\lambda}_{b, max} < + \infty.
\end{equation}
 Our main  interest is to allow $\lambda(t)$ to go to zero as 
$t\to +\infty$. This makes the corresponding Levenberg-Marquardt regularization method asymptotically close to the Newton's method.\\
Let us summarize the results obtained in  \cite{AAS}. Under the above assumptions, for any Cauchy data  $ x_{0} \in \mbox{dom} \partial \varphi$ and $\upsilon_{0} \in \partial  \varphi (x_{0})$,
there exists a unique strong global solution $\left(x\left(\cdot\right),\upsilon\left(\cdot\right)\right):[0,+\infty [ \rightarrow H\times H$
of the Cauchy problem (\ref{basica})-(\ref{basicc}).  Assuming that
the solution set is nonempty, 
 if $\lambda\left(t\right)$ tends to zero not too fast,
as $t\longrightarrow+\infty$, then $\upsilon\left(t\right)\longrightarrow0$
strongly, and $x\left(t\right)$ converges weakly to some equilibrium
which is a solution of the  minimization problem $(\mathcal P )$.
By Minty representation of $\partial\varphi$,  the solution pair $\left(x\left(\cdot\right),\upsilon\left(\cdot\right)\right)$ of
(\ref{basica})-(\ref{basicc}) can be represented as
follows: set $\mu(t) = \frac{1}{\lambda (t)}$, then for any $t\in[0,+\infty)$, 
\begin{eqnarray}
 &  & x\left(t\right)=\textrm{prox}_{\mu\left(t\right)\varphi}\left(z\left(t\right)\right);\\
 &  & \upsilon\left(t\right)=\nabla \varphi_{\mu\left(t\right)}\left(z\left(t\right)\right),
\end{eqnarray}
where 
  $z\left(\cdot\right):[0,+\infty [\rightarrow H$
is the unique strong global solution of the Cauchy problem 
\begin{eqnarray} 
& & \dot{z}(t)+(\mu(t)-\dot{\mu}(t))\nabla\varphi_{\mu(t)}(z(t))+\mu(t)\nabla\psi\left(\textrm{prox}_{\mu\left(t\right)\varPhi}\left(z\left(t\right)\right)\right)=0 \label{basic.z1} \\
& & z\left(0\right)=x_{0}+\mu\left(0\right)\upsilon_{0}. \label{basic.z2} 
\end{eqnarray}

Let us recall that $\textrm{prox}_{\mu\varphi}$ is the proximal mapping associated to $\mu\varphi$. Equivalently, $\textrm{prox}_{\mu\varphi}=\left(I+\mu\partial\phi\right)^{-1}$ is the resolvent of index $\mu>0$ of the maximal monotone operator $\partial\varphi$, and $\nabla\varphi_{\mu}$ is its Yosida approximation of index $\mu >0$.

Let us stress the fact
that, for each $t>0$, the operators $\textrm{prox}_{\mu(t)\varphi}:H\longrightarrow H$,
$\nabla\varphi_{\mu(t)}:H\longrightarrow H$ are everywhere
defined and Lipschitz continuous, which makes this system relevant
to the Cauchy\textendash Lipschitz theorem in the nonautonomous case, which naturally suggests good stability results of the solution of (\ref{basica})-(\ref{basicc}) with respect to the data. 

$\vphantom{}$

This paper is organized as follows: We first establish a priori  energy estimates  on the trajectories. Then we consider the case where
$\lambda$ is locally absolutely continuous. Note that it is important, for numerical reasons, to study the stability
of the solution with respect to perturbations of the data, and in particular of $\lambda$ which plays a crucial role in the regularization process. In Theorem
\ref{basic-stab} we prove the Lipschitz continuous dependence of the solution with
respect to $\lambda$. Moreover, the Lipschitz constant only depends
on the $L^{1}$ norm of the time derivative of $\lambda$. Finally, we
extend our analysis to the case where $\lambda$ is a function with
bounded variation (possibly involving jumps). We use a regularization
by convolution method in order to reduce to the smooth case, and then
pass to the limit in the equations. So doing, in Theorem \ref{BV-Theorem} and Corollary
\ref{BV-corollary}, we prove the existence and uniqueness of a strong solution for
(1a)-(1c), in the case where $\lambda$ is a function with
bounded variation.

\section{{\large A priori estimates}}

 The linear space $H\times H$ is equipped with its usual
Hilbertian norm $\left\Vert \left(\xi,\zeta\right)\right\Vert =\sqrt{\left\Vert \xi\right\Vert ^{2}+\left\Vert \zeta\right\Vert ^{2}.}$
In this section we work on a fixed bounded interval $\left[0, T \right]$, and following assumption (\ref{ac-lambda}), we suppose that there exists some positive constant $c_0$ such that
\begin{equation}
\label{localbound2}
0 < c_0 \leq \lambda (t)  \quad \mbox{for all} \ t\in [0,T].
\end{equation}
We will also assume that
$\nabla \psi$ is $L_{\psi}$-Lipschitz continuous. Indeed, this is not a restrictive assumption since one can reduce the study to trajectories belonging to a fixed ball in $H$.
We will often omit the time variable $t$ and write $x,$$\upsilon$....
for $x\left(t\right)$, $\upsilon\left(t\right)$.... when no ambiguity
arises.

In the following two Propositions we denote by  $\left(x\left(\cdot\right),\upsilon\left(\cdot\right)\right):[0,+\infty)\rightarrow H\times H$ the unique strong global solution
of the Cauchy problem (\ref{basica})-(\ref{basicc}).

\begin{proposition} Let $\left(x,\upsilon\right)$ be   the  strong solution of system (\ref{basica})-(\ref{basicc}) on $\left[0,T\right]$, $T>0$. Then 
  \label{pr:energy1}
\begin{align}
&\int_0^T \| \dot x (t) \|^2   \leq \frac{1}{c_0}\left(  ( \varphi + \psi)(x_0) - \inf_H ( \varphi + \psi)            \right).\\
&\|x\|_{L^{\infty}(0,T; H)}   \leq   \|x_0\| +   \sqrt{ \frac{T}{c_0}}\left(  ( \varphi + \psi)(x_0) - \inf_H ( \varphi + \psi)            \right)^{\frac{1}{2}}.
\end{align}
\end{proposition}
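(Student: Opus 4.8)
The plan is to use the composite function $E(t) := (\varphi+\psi)(x(t))$ as a Lyapunov-type energy and to show that it is nonincreasing along trajectories, with a quantitative decay rate controlled by $\|\dot x\|^2$. First I would establish that $t\mapsto E(t)$ is absolutely continuous on $[0,T]$ and compute its derivative. Since $(x,v)$ is a strong solution, $x(\cdot)$ is absolutely continuous with $\dot x\in L^2$; as $\psi$ is differentiable, $\frac{d}{dt}\psi(x(t))=\langle \nabla\psi(x(t)),\dot x(t)\rangle$ for a.e.\ $t$. For the nonsmooth convex part I would invoke the classical chain rule for convex functions along absolutely continuous arcs (Brezis): because $v(t)\in\partial\varphi(x(t))$ for a.e.\ $t$ and $v(\cdot)$ is itself absolutely continuous, the map $t\mapsto\varphi(x(t))$ is absolutely continuous and $\frac{d}{dt}\varphi(x(t))=\langle v(t),\dot x(t)\rangle$ for a.e.\ $t$. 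Combining the two contributions gives $\dot E(t)=\langle v(t)+\nabla\psi(x(t)),\dot x(t)\rangle$ for a.e.\ $t$.

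Next I would exploit the dynamics to eliminate the term $v+\nabla\psi$. Rewriting (\ref{basicb}) as $v(t)+\nabla\psi(x(t))=-\lambda(t)\dot x(t)-\dot v(t)$ yields $\dot E(t)=-\lambda(t)\|\dot x(t)\|^2-\langle \dot v(t),\dot x(t)\rangle$ for a.e.\ $t$. The decisive sign is supplied by the monotonicity of $\partial\varphi$: differentiating the monotonicity inequality $\langle v(t+h)-v(t),\,x(t+h)-x(t)\rangle\ge 0$ (dividing by $h^2$ and letting $h\to 0$) produces $\langle \dot v(t),\dot x(t)\rangle\ge 0$ for a.e.\ $t$. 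Together with the lower bound $\lambda(t)\ge c_0$ from (\ref{localbound2}), we obtain the differential inequality $\dot E(t)\le -c_0\|\dot x(t)\|^2$.

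Integrating this inequality over $[0,T]$ and using $E(T)=(\varphi+\psi)(x(T))\ge \inf_H(\varphi+\psi)$ gives $c_0\int_0^T\|\dot x(t)\|^2\,dt\le (\varphi+\psi)(x_0)-\inf_H(\varphi+\psi)$, which is the first estimate. The second estimate then follows from a routine Cauchy--Schwarz argument applied to $x(t)-x_0=\int_0^t\dot x(s)\,ds$, namely $\|x(t)-x_0\|\le \sqrt{t}\,\bigl(\int_0^T\|\dot x(s)\|^2\,ds\bigr)^{1/2}$, after substituting the first bound and taking the supremum over $t\in[0,T]$.

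I expect the only genuine obstacle to lie in the rigorous justification of the two differentiation steps, the chain rule for $\varphi\circ x$ and the inequality $\langle \dot v,\dot x\rangle\ge 0$, since both demand care about null sets and rely on the absolute continuity of $v(\cdot)$; once these regularity facts are secured (they are part of the strong-solution framework already established for (\ref{basica})--(\ref{basicc})), the remainder of the argument is elementary integration.
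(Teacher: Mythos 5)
Your proposal is correct and follows essentially the same route as the paper: the sign $\langle\dot v,\dot x\rangle\ge 0$ from monotonicity of $\partial\varphi$, the Br\'ezis chain rule for $t\mapsto\varphi(x(t))$, the substitution of $v+\nabla\psi$ from the dynamics (equivalent to the paper's taking the inner product of the equation with $\dot x$), integration, and Cauchy--Schwarz for the $L^\infty$ bound. The only small difference is that the paper first \emph{derives} $\dot x\in L^2(0,T;H)$ from the energy inequality (using boundedness of $x$, $v$ and $\lambda\ge c_0$) before invoking the Br\'ezis lemma, whereas you take it as part of the strong-solution framework; this is harmless but worth making explicit since the chain rule's hypotheses require it.
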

\begin{proof}
a)  For almost   every $t>0,$  \ $\dot x (t)$ and $\dot {v}(t)$ are well defined, thus
  \[
  \inner{\dot x(t)}{\dot v(t)}=\lim_{h\to 0} \ 
  \frac{1}{h^2}\inner{x(t+h)-x(t)}{v(t+h)-v(t)}.
  \]
  By equation (\ref{basica}), we have  $v(t)\in \partial \varphi(x(t))$. Since $\partial \varphi: H \tos H$ is monotone
  \[
\inner{x(t+h)-x(t)}{v(t+h)-v(t)} \geq 0.
  \]
 Dividing by $h^2$ and passing to the limit preserves the inequality, which   yields  
 \begin{equation} \label{energy2}
 \inner{\dot x (t)}{\dot v(t) }\geq 0.
 \end{equation}
By taking
  the inner product of both sides of (\ref{basicb}) by $\dot{x}(t)$ we obtain
  \[
 \lambda(t)\|\dot x(t)\|^2 + \inner{\dot x (t)}{\dot v(t) }   + \inner{ v(t) } {\dot x (t)}  + \inner{ \nabla \psi(x(t))}{\dot x (t)}=0.
  \]
 By  (\ref{energy2}) we infer 
 \begin{equation} \label{energy3}
 \lambda(t)\|\dot x(t)\|^2   + \inner{ v(t) } {\dot x (t)}  + \inner{ \nabla \psi(x(t))}{\dot x (t)}\leq 0.
  \end{equation}
	Noticing that $x$ and $v$ are continuous on $\left[0,T\right]$, hence bounded, and that $\lambda$ is bounded from below on $\left[0,T\right]$ by a positive number,
one easily gets from (\ref{energy3}) that 
\begin{equation}\label{energy0} 
\dot{x} \in L^2 (0,T; H).
\end{equation}
For our stability analysis, we now establish a precise estimate of the $L^2$ norm of $\dot{x}$.
By the classical derivation chain rule
 \begin{equation} \label{energy4}
  \frac{d}{dt}  \psi(x(t)) =                \inner{ \nabla \psi(x(t))}{\dot x (t)}.
   \end{equation}
	We appeal to  a similar formula which is still valid for a convex lower semicontinuous proper function  $\varphi: H \to \R \cup \left\{+\infty\right\}$.
Notice  that

$i)$ $v(t)\in \partial \varphi(x(t))$ for almost every $t\in [0,T]$;
  
$ii)$ $v$ is continuous on $\left[0,T\right]$, and hence belongs to $L^2 (0,T; H)$;

$iii)$ $\dot{x} \in L^2 (0,T; H)$  by (\ref{energy0}).

\noindent By $i)$,  $ii)$, $iii)$, conditions of  \cite[Lemma 3.3]{Br} are satisfied, which allows to deduce that $t \mapsto \varphi\left(x\left(t\right)\right)$ is absolutely continuous 
on $\left[0,T\right]$, and for almost $t\in\left[0,T\right]$,
 \begin{equation} \label{energy5}
  \frac{d}{dt}  \varphi(x(t)) =                \inner{ v(t)}{\dot x (t)}.
  \end{equation}
 Combining  (\ref{energy3}) with   (\ref{energy4})  and (\ref{energy5}) we obtain
  \begin{equation} \label{energy6}
 \lambda(t)\|\dot x(t)\|^2  +   \frac{d}{dt} \left( \varphi(x(t))  +  \psi(x(t))     \right) \leq 0.
  \end{equation}
 By integrating the above inequality from $0$ to $T$ we obtain
  \begin{equation} \label{energy7}
 \int_0^T  \lambda(t)\|\dot x(t)\|^2 dt  +    (\varphi  +  \psi)(x(T))     \leq (\varphi  +   \psi)(x(0)).
  \end{equation}
Since $\varphi+\psi$ is bounded from below on $H$, and $\lambda$ is minorized  by the  positive constant $c_0$ on $[0,T]$ (see (\ref{localbound2})), we infer
  \begin{equation} \label{energy8}
 \int_0^T \| \dot x (t) \|^2 dt  \leq \frac{1}{c_0}\left(  ( \varphi + \psi)(x_0) - \inf_H ( \varphi + \psi)            \right).
  \end{equation}
 
 b) Since $x(\cdot)$ is absolutely continuous on bounded sets, for any $t\in [0,T]$
 \begin{equation} \label{sup1}
 x(t) =   x_0 +   \int_0^t  \dot x (\tau) d \tau.
  \end{equation}
 Passing to the norm, and using Cauchy-Schwarz inequality yields
 \begin{align} \label{sup2}
 \|x(t)\| &\leq  \| x_0 \| +   \int_0^t  \| \dot x (\tau)\| d \tau \\ 
    & \leq  \| x_0 \| +  \sqrt{T} \left(\int_0^T  \| \dot x (\tau)\|^2 d \tau \right)^{\frac{1}{2}} .
  \end{align}
Combining the above inequality with (\ref{energy8}) gives
 \begin{equation} \label{sup3}
 \|x(t)\| \leq    \| x_0 \| +  \sqrt{\frac{T}{c_0}} \left(  ( \varphi + \psi)(x_0) - \inf_H ( \varphi + \psi)   \right)^{\frac{1}{2}}  .   
  \end{equation}
This being true for any  $t\in [0,T]$
  \begin{equation} \label{sup4}
\|x\|_{L^{\infty}(0,T; H)}   \leq   \|x_0\| +   \sqrt{ \frac{T}{c_0}}\left(  ( \varphi + \psi)(x_0) - \inf_H ( \varphi + \psi)    \right)^{\frac{1}{2}} .
   \end{equation}
\end{proof}

Let us now exploit another a priori energy estimate.

\begin{proposition}  Let $\left(x,\upsilon\right)$ be  the strong solution of system (\ref{basica})-(\ref{basicc}) on $\left[0,T\right]$, $T>0$. Then 
  \label{pr:energy20}
\begin{align}
\int_0^T  \|\dot v(t)\|^2 dt   & \leq  \| v_0\|^2 +  2T \| \nabla \psi(x_0) \|^2 + \frac{2T^2 L_{\psi}^2}{c_0}\left(  ( \varphi + \psi)(x_0) - \inf_H ( \varphi + \psi) \right)\\
\|v\|_{L^{\infty}(0,T; H)}     & \leq  \| v_0\| +  \sqrt{2T} \| \nabla \psi(x_0) \| +    \sqrt{\frac{2}{c_0}} TL_{\psi}\left(  ( \varphi + \psi)(x_0) - \inf_H ( \varphi + \psi) \right)^{\frac{1}{2}}.
\end{align}
\end{proposition}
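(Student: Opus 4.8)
The plan is to run an energy estimate parallel to that of Proposition \ref{pr:energy1}, but now testing the governing equation (\ref{basicb}) against $\dot v$ instead of $\dot x$. Taking the inner product of (\ref{basicb}) with $\dot v(t)$ gives
\[
\lambda(t)\inner{\dot x(t)}{\dot v(t)} + \|\dot v(t)\|^2 + \inner{v(t)}{\dot v(t)} + \inner{\nabla \psi(x(t))}{\dot v(t)} = 0.
\]
Since $\lambda(t) > 0$ and the monotonicity inequality (\ref{energy2}) already established $\inner{\dot x(t)}{\dot v(t)} \geq 0$, the first term is nonnegative and can be discarded, while $\inner{v(t)}{\dot v(t)} = \tfrac{1}{2}\tfrac{d}{dt}\|v(t)\|^2$.

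Next I would control the cross term by Cauchy--Schwarz followed by Young's inequality, $-\inner{\nabla \psi(x(t))}{\dot v(t)} \leq \tfrac{1}{2}\|\dot v(t)\|^2 + \tfrac{1}{2}\|\nabla \psi(x(t))\|^2$, the point being that the resulting $\tfrac{1}{2}\|\dot v\|^2$ is absorbed into the $\|\dot v\|^2$ already present. This produces the clean differential inequality
\[
\|\dot v(t)\|^2 + \frac{d}{dt}\|v(t)\|^2 \leq \|\nabla \psi(x(t))\|^2.
\]
It then remains to bound the right-hand side. Using the $L_\psi$-Lipschitz continuity of $\nabla \psi$ together with $(a+b)^2 \le 2a^2 + 2b^2$, I get $\|\nabla \psi(x(t))\|^2 \le 2L_\psi^2\|x(t) - x_0\|^2 + 2\|\nabla \psi(x_0)\|^2$, and then invoke $\|x(t)-x_0\|^2 \le \tfrac{T}{c_0}\bigl((\varphi+\psi)(x_0) - \inf_H(\varphi+\psi)\bigr)$, which follows from (\ref{sup1}) and (\ref{energy8}) in the previous proof. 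Integrating the resulting pointwise bound over $[0,T]$ yields $\int_0^T \|\nabla \psi(x(t))\|^2\,dt \le 2T\|\nabla \psi(x_0)\|^2 + \tfrac{2T^2 L_\psi^2}{c_0}\bigl((\varphi+\psi)(x_0) - \inf_H(\varphi+\psi)\bigr)$.

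The two assertions now follow from two different integrations of the differential inequality. For the $L^2$ bound on $\dot v$, I integrate the full inequality from $0$ to $T$, dropping the nonnegative terminal term $\|v(T)\|^2$, which gives $\int_0^T\|\dot v\|^2 \le \|v_0\|^2 + \int_0^T\|\nabla \psi(x)\|^2$ and hence the first estimate. For the $L^\infty$ bound I instead discard the nonnegative $\|\dot v\|^2$ term and integrate only $\tfrac{d}{dt}\|v\|^2$ from $0$ to an arbitrary $t \le T$, obtaining $\|v(t)\|^2 \le \|v_0\|^2 + \int_0^T\|\nabla \psi(x)\|^2$; taking the supremum over $t$, then a square root and using subadditivity $\sqrt{a+b+c}\le\sqrt{a} + \sqrt{b} + \sqrt{c}$, gives the stated bound exactly. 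I do not anticipate a genuine obstacle: the only subtlety is that the $L^\infty$ estimate must be read off by integrating $\tfrac{d}{dt}\|v\|^2$ directly, rather than integrating $\dot v$ and applying Cauchy--Schwarz (which would introduce a spurious factor $\sqrt{T}$). The absolute continuity of $t\mapsto\|v(t)\|^2$ with the expected derivative is immediate from $v,\dot v\in L^2(0,T;H)$, exactly as in Proposition \ref{pr:energy1}.
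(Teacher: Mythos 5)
Your proposal is correct and follows essentially the same route as the paper: test (\ref{basicb}) against $\dot v$, discard the nonnegative term $\lambda\inner{\dot x}{\dot v}$ via the monotonicity inequality (\ref{energy2}), absorb the cross term by Young's inequality to reach $\|\dot v\|^2 + \tfrac{d}{dt}\|v\|^2 \leq \|\nabla\psi(x)\|^2$, and then bound $\|\nabla\psi(x(t))\|$ through the Lipschitz property of $\nabla\psi$ and the estimate $\|x-x_0\|_{L^{\infty}} \leq \sqrt{T/c_0}\left((\varphi+\psi)(x_0)-\inf_H(\varphi+\psi)\right)^{1/2}$ inherited from Proposition \ref{pr:energy1}. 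The only cosmetic difference is that the paper keeps both terms in the single integrated inequality (\ref{energy34}) and drops one or the other to read off each assertion, whereas you perform the two integrations separately; the resulting constants are identical.
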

\begin{proof}
By taking the scalar product of (\ref{basicb}) by $\dot{v}(t)$ we obtain 
 \[
 \lambda(t)\inner{\dot x (t)}{\dot v(t) }   +  \|\dot v(t)\|^2   +   \inner{ v(t) } {\dot v (t)}  + \inner{ \nabla \psi(x(t))}{\dot v (t)}=0.
  \]
 By  (\ref{energy2}) we infer 
 \begin{equation} \label{energy30}
 \|\dot v(t)\|^2 +     \inner{ v(t) } {\dot v (t)}  + \inner{ \nabla \psi(x(t))}{\dot v (t)} \leq 0.
  \end{equation}
Hence
 \begin{align} \label{energy31}
 \|\dot v(t)\|^2 +     \frac{1}{2}  \frac{d}{dt} \| v(t)\|^2  &\leq    \| \nabla \psi(x(t)) \|   \| \dot v (t) \| \\ 
 & \leq \frac{1}{2}  \| \nabla \psi(x(t)) \|^2    + \frac{1}{2}   \| \dot v (t) \|^2
  \end{align}
which implies
\begin{equation} \label{energy320}
 \|\dot v(t)\|^2 +       \frac{d}{dt} \| v(t)\|^2   \leq   \| \nabla \psi(x(t)) \|^2.
  \end{equation}
 By integrating  the above inequality we deduce that, for any $t\in [0,T]$
\begin{equation} \label{energy32}
 \int_0^t \|\dot v(\tau)\|^2 d \tau +    \| v(t)\|^2   \leq   \| v_0\|^2   +  \int_0^T  \| \nabla \psi(x(\tau)) \|^2 d \tau.
  \end{equation}
Since $\nabla \psi$ is $L_{\psi}$-Lipschitz continuous
\begin{equation} \label{energy33}
 \| \nabla \psi(x(\tau)) \| \leq \| \nabla \psi(x_0) \| + L_{\psi} \| x(\tau) - x_0\|.
  \end{equation}
A careful look at the proof of (\ref{sup4}) gives the more precise estimate
  \begin{equation} \label{sup44}
\|x -x_0\|_{L^{\infty}(0,T; H)}   \leq   \sqrt{ \frac{T}{c_0}}\left(  ( \varphi + \psi)(x_0) - \inf_H ( \varphi + \psi)    \right)^{\frac{1}{2}} 
   \end{equation}
Hence for any $\tau \in [0,T]$
  \begin{equation} \label{sup45}
\| \nabla \psi(x(\tau)) \| \leq \| \nabla \psi(x_0) \| + L_{\psi} \sqrt{ \frac{T}{c_0}}\left(  ( \varphi + \psi)(x_0) - \inf_H ( \varphi + \psi)    \right)^{\frac{1}{2}}.
   \end{equation}  
Combining (\ref{energy32}) with (\ref{sup45}) gives
\begin{equation} \label{energy34}
 \int_0^t \|\dot v(\tau)\|^2 d \tau + \| v(t)\|^2   \leq   \| v_0\|^2 +  2T \| \nabla \psi(x_0) \|^2 + \frac{2T^2 L_{\psi}^2}{c_0}\left(  ( \varphi + \psi)(x_0) - \inf_H ( \varphi + \psi) \right) .
  \end{equation}
As a consequence
\begin{equation} \label{energy35}
\int_0^T  \|\dot v(t)\|^2 dt    \leq  \| v_0\|^2 +  2T \| \nabla \psi(x_0) \|^2 + \frac{2T^2 L_{\psi}^2}{c_0}\left(  ( \varphi + \psi)(x_0) - \inf_H ( \varphi + \psi) \right),
  \end{equation}
and 
\begin{equation} \label{energy36}
\|v\|_{L^{\infty}(0,T; H)}      \leq  \| v_0\| +  \sqrt{2T} \| \nabla \psi(x_0) \| +    \sqrt{\frac{2}{c_0}} TL_{\psi}\left(  ( \varphi + \psi)(x_0) - \inf_H ( \varphi + \psi) \right)^{\frac{1}{2}}.
  \end{equation}
which ends the proof.
\end{proof}

We can now deduce from the two preceding propositions an a priori bound on the $L^{\infty}$ norm of $\dot x$ and $\dot v$. 

\begin{proposition} 
  \label{pr:energy30}
\begin{align}
\|\dot x\|_{L^{\infty}(0,T; H)} &\leq  \frac{\| v_0\|}{c_0} +   \frac{1+ \sqrt{2T}}{c_0}   \| \nabla \psi(x_0) \| +   \frac{(\sqrt{2}T+ \sqrt{T}) L_{\psi}}{c_0^{\frac{3}{2}}} \left(  ( \varphi + \psi)(x_0) - \inf_H ( \varphi + \psi)    \right)^{\frac{1}{2}}\\  
\|\dot v\|_{L^{\infty}(0,T; H)} &\leq  \| v_0\| +   (1+ \sqrt{2T})   \| \nabla \psi(x_0) \| +   \frac{(\sqrt{2}T+ \sqrt{T}) L_{\psi}}{c_0^{\frac{1}{2}}} \left(  ( \varphi + \psi)(x_0) - \inf_H ( \varphi + \psi)    \right)^{\frac{1}{2}}
\end{align}
\end{proposition}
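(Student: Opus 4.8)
The plan is to extract \emph{pointwise} estimates directly from the two energy inequalities already obtained in the proofs of Propositions \ref{pr:energy1} and \ref{pr:energy20}, rather than from their integrated $L^2$ consequences. The crucial observation is that the monotonicity inequality (\ref{energy2}) does more than control $L^2$ norms: after a single application of Cauchy--Schwarz it yields a genuine bound on $\|\dot x(t)\|$ and $\|\dot v(t)\|$, for almost every $t$, in terms of the single quantity $\|v(t)\|+\|\nabla\psi(x(t))\|$, which is already controlled in $L^\infty$ by the previous results. The key is therefore to pass to pointwise estimates \emph{before} integrating; integrating first (as done for the $L^2$ bounds) destroys exactly the pointwise control needed for an $L^\infty$ conclusion.

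Concretely, I would return to inequality (\ref{energy3}) and bound its last two inner products by Cauchy--Schwarz, obtaining
\begin{equation*}
\lambda(t)\|\dot x(t)\|^2 \leq \left(\|v(t)\|+\|\nabla\psi(x(t))\|\right)\|\dot x(t)\|.
\end{equation*}
Dividing by $\|\dot x(t)\|$ (the case $\dot x(t)=0$ being trivial) and using the lower bound $\lambda(t)\geq c_0$ from (\ref{localbound2}) gives, for a.e. $t\in[0,T]$,
\begin{equation*}
\|\dot x(t)\| \leq \frac{1}{c_0}\left(\|v(t)\|+\|\nabla\psi(x(t))\|\right).
\end{equation*}
The identical manipulation applied to (\ref{energy30}) produces
\begin{equation*}
\|\dot v(t)\| \leq \|v(t)\|+\|\nabla\psi(x(t))\| \qquad \text{for a.e. } t\in[0,T].
\end{equation*}
Note that the factor $1/c_0$ is present only for $\dot x$; this is exactly what accounts for the parallel structure of the two claimed estimates, the bound for $\dot x$ being precisely $1/c_0$ times that for $\dot v$.

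It then remains only to majorize the common right-hand side $\|v(t)\|+\|\nabla\psi(x(t))\|$ uniformly in $t$. For this I would invoke the $L^\infty$ estimate (\ref{energy36}) on $v$ together with the pointwise bound (\ref{sup45}) on $\|\nabla\psi(x(\tau))\|$; adding the two and taking the supremum over $t\in[0,T]$ yields the stated bound on $\|\dot v\|_{L^\infty(0,T;H)}$, and multiplying through by $1/c_0$ yields the stated bound on $\|\dot x\|_{L^\infty(0,T;H)}$. One checks that the coefficients of $\|\nabla\psi(x_0)\|$ collect into $1+\sqrt{2T}$ and those of $L_\psi\big((\varphi+\psi)(x_0)-\inf_H(\varphi+\psi)\big)^{1/2}$ into $(\sqrt 2\,T+\sqrt T)\,c_0^{-1/2}$, matching the statement exactly. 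There is no real obstacle here: the argument is a reorganization of inequalities already in hand, the only point requiring care being the decision to apply Cauchy--Schwarz to the differential energy balances rather than to their integrated forms, and the routine separation of the trivial case $\dot x(t)=0$ (resp. $\dot v(t)=0$) when dividing.
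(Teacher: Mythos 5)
Your proposal is correct and follows essentially the same route as the paper: both start from the energy identities obtained by pairing (\ref{basicb}) with $\dot x$ (resp.\ $\dot v$), drop the cross term via the monotonicity inequality (\ref{energy2}), apply Cauchy--Schwarz and $\lambda\geq c_0$ to get the pointwise bounds $\|\dot x(t)\|\leq \tfrac{1}{c_0}\|v(t)+\nabla\psi(x(t))\|$ and $\|\dot v(t)\|\leq\|v(t)+\nabla\psi(x(t))\|$, and then conclude with (\ref{energy36}) and (\ref{sup44})--(\ref{sup45}). The only cosmetic difference is that you bound $\|v\|+\|\nabla\psi(x)\|$ where the paper bounds $\|v+\nabla\psi(x)\|$ before applying the triangle inequality, which yields identical constants.
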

\begin{proof}
a) Let us return to the equation obtained by taking
  the inner product of both sides of (\ref{basicb}) by $\dot{x}(t)$ 
  \[
 \lambda(t)\|\dot x(t)\|^2 + \inner{\dot x (t)}{\dot v(t) }   + \inner{ v(t) } {\dot x (t)}  + \inner{ \nabla \psi(x(t))}{\dot x (t)}=0.
  \] 
  By  (\ref{energy2}), and $\lambda$ is minorized  by the  positive constant $c_0$ on $[0,T], $we infer 
\begin{equation*} 
c_0\|\dot x(t)\|^2   + \inner{ v(t) + \nabla \psi(x(t))} {\dot x (t)} \leq 0.
  \end{equation*}
 Hence
 \begin{align*} 
\|\dot x(t)\|    &\leq \frac{1}{c_0} \| v(t) + \nabla \psi(x(t))\|\\
& \leq \frac{1}{c_0} (\| v(t)\|  + \|\nabla \psi(x_0)\| + L_{\psi} \|   x(t) -x_0\|).  
  \end{align*} 
 By  (\ref{sup44})   and   (\ref{energy36}) we deduce that
 \begin{align*} 
\|\dot x\|_{L^{\infty}(0,T; H)} &\leq  \frac{1}{c_0} \left(  \| v_0\| +  \sqrt{2T} \| \nabla \psi(x_0) \| +    \sqrt{\frac{2}{c_0}} TL_{\psi}\left(  ( \varphi + \psi)(x_0) - \inf_H ( \varphi + \psi) \right)^{\frac{1}{2}}  \right) \\    
&+  \frac{1}{c_0}  \|\nabla \psi(x_0)\| +  
 \frac{L_{\psi}}{c_0}  \sqrt{ \frac{T}{c_0}}\left(  ( \varphi + \psi)(x_0) - \inf_H ( \varphi + \psi)    \right)^{\frac{1}{2}}     \\
& \leq \frac{\| v_0\|}{c_0} +   \frac{1+ \sqrt{2T}}{c_0}   \| \nabla \psi(x_0) \| +   \frac{(\sqrt{2}T+ \sqrt{T}) L_{\psi}}{c_0^{\frac{3}{2}}} \left(  ( \varphi + \psi)(x_0) - \inf_H ( \varphi + \psi)    \right)^{\frac{1}{2}}.             
  \end{align*} 
b)  Let us return to the equation obtained by taking
  the inner product of both sides of (\ref{basicb}) by $\dot{v}(t)$  
  \[
 \lambda(t)\inner{\dot x (t)}{\dot v(t) }   +  \|\dot v(t)\|^2   +   \inner{ v(t) } {\dot v (t)}  + \inner{ \nabla \psi(x(t))}{\dot v (t)}=0.
  \]
  A similar argument as above yields
   \begin{equation*} 
\|\dot v(t)\|    \leq  \| v(t) + \nabla \psi(x(t))\|\\
  \end{equation*} 
 from which we deduce the result. 
\end{proof}

Let us enunciate some straight  consequences of Proposition \ref{pr:energy30} .
 
\begin{corollary}
  \label{cr:v.1}
 The following properties hold: for any $0 < T < + \infty$
  \begin{enumerate}
  \item $t\mapsto x(t)$ is Lipschitz continuous on $[0,T]$ with constant 
  $$\frac{\| v_0\|}{c_0} +   \frac{1+ \sqrt{2T}}{c_0}   \| \nabla \psi(x_0) \| +   \frac{(\sqrt{2}T+ \sqrt{T}) L_{\psi}}{c_0^{\frac{3}{2}}} \left(  ( \varphi + \psi)(x_0) - \inf_H ( \varphi + \psi)    \right)^{\frac{1}{2}}$$
  \item   $t\mapsto v(t)$ is Lipschitz continuous on $[0,T]$, with constant 
    $$\| v_0\| +   (1+ \sqrt{2T})   \| \nabla \psi(x_0) \| +   \frac{(\sqrt{2}T+ \sqrt{T}) L_{\psi}}{c_0^{\frac{1}{2}}} \left(  ( \varphi + \psi)(x_0) - \inf_H ( \varphi + \psi)    \right)^{\frac{1}{2}}$$
  \end{enumerate}
\end{corollary}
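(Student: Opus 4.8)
The plan is to observe that the two asserted Lipschitz constants are \emph{exactly} the $L^{\infty}$ bounds on $\dot x$ and $\dot v$ furnished by Proposition \ref{pr:energy30}, so that the corollary reduces to the elementary passage from ``essentially bounded derivative'' to ``Lipschitz continuity with that same bound''. There is no new analytic content beyond the preceding proposition; the work is purely in recording the implication.

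First I would recall that, since $\left(x,\upsilon\right)$ is a strong solution of (\ref{basica})-(\ref{basicc}) on $[0,T]$, both coordinates $x(\cdot)$ and $v(\cdot)$ are absolutely continuous on $[0,T]$. For $x$ this was already used in the proof of Proposition \ref{pr:energy1}b, and for $v$ it follows from $\dot v \in L^2(0,T;H)$ (established in Proposition \ref{pr:energy20}) together with its continuity. Consequently, for any $0 \leq s \leq t \leq T$ one has the fundamental-theorem-of-calculus representations $x(t)-x(s)=\int_s^t \dot x(\tau)\,d\tau$ and $v(t)-v(s)=\int_s^t \dot v(\tau)\,d\tau$.

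Next, passing to the norm and using monotonicity of the integral, I would estimate
$$
\norm{x(t)-x(s)} \leq \int_s^t \norm{\dot x(\tau)}\,d\tau \leq (t-s)\,\|\dot x\|_{L^{\infty}(0,T; H)},
$$
and identically $\norm{v(t)-v(s)} \leq (t-s)\,\|\dot v\|_{L^{\infty}(0,T; H)}$. This already shows that $x$ and $v$ are Lipschitz continuous on $[0,T]$ with respective constants $\|\dot x\|_{L^{\infty}(0,T; H)}$ and $\|\dot v\|_{L^{\infty}(0,T; H)}$. It then suffices to substitute the explicit upper bounds for these two $L^{\infty}$ norms supplied by Proposition \ref{pr:energy30}, which are precisely the quantities displayed in items (1) and (2), to conclude.

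As for difficulties, there are essentially none of substance. The only point worth a word of care is the justification that an absolutely continuous $H$-valued function whose derivative lies in $L^{\infty}(0,T;H)$ is Lipschitz with constant equal to the essential supremum of that derivative; but this is classical and is exactly the content of the integral estimate above, so no separate argument is needed.
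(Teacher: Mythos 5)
Your proposal is correct and matches the paper's (implicit) argument: the paper presents this corollary as a ``straight consequence'' of Proposition \ref{pr:energy30} with no written proof, and the intended reasoning is exactly your passage from the $L^{\infty}$ bounds on $\dot x$ and $\dot v$, via absolute continuity and the integral representation, to Lipschitz continuity with the same constants. Nothing is missing.
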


\section{{\large Stability results}}

In the next theorem we analyze the Lipschitz continuous dependence of  the solution $\left(x,\upsilon\right)$
of the Cauchy problem (\ref{basica})-(\ref{basicc})
with respect to the function $\lambda$ and the initial point $\left(x_{0},\upsilon_{0}\right)$.
Our demonstration is based on that followed in \cite [Theorem 3.1]{ARS}.

\begin{theorem}\label{basic-stab}
Suppose that $\lambda,\eta:\left[0,T\right]\longrightarrow\left[c_{0},+\infty\right[$
are absolutely continuous functions, with $T>0$ and $c_{0}>0$. Let
$\left(x,\upsilon\right),\left(y,w\right):\left[0,T\right]\longrightarrow H\times H$
be the respective strong solutions of the inclusions 
\begin{align}
 & \lambda\dot{x}+\dot{\upsilon}+\upsilon+  \nabla \psi \left(x\right)=0,\:\,\upsilon\in \partial \varphi \left(x\right),\:\, x\left(0\right)=x_{0},\:\,\upsilon\left(0\right)=\upsilon_{0},\label{6}\\
 & \eta\dot{y}+\dot{w}+w+ \nabla \psi\left(y\right) =0,\:\, w\in \partial \varphi \left(y\right),\:\, y\left(0\right)=y_{0},\:\, w\left(0\right)=w_{0}.\label{7}
\end{align}
Define $\theta:\left[0,T\right]\longrightarrow\mathbb{R}$, for each $t\in  \left[0,T\right]$, by
\[
\theta (t)=\sqrt{c_{0}^{2}\left\Vert x(t)-y(t)\right\Vert ^{2}+\left\Vert \upsilon (t)-w (t) \right\Vert ^{2}}.
\]
Then 
\begin{equation}\label{basic-stab-0}
\| \theta \|_{L^{\infty}([0,T])} \leq  \left[ \frac{\lambda (0)+\eta (0)}{2}\left\Vert x_{0}-y_{0}\right\Vert +\left\Vert \upsilon_{0}-w_{0}\right\Vert  + \frac{C}{2} \| \lambda-\eta \|_{L^1([0,T])} \right]
\times  \exp \left(  \frac{ \| \dot{\lambda} + \dot{\eta} \|_{L^1}   }{2c_0} + T(1 + \frac{L_{\psi}}{c_0})  \right),
\end{equation}
with 
\begin{align*}
C &=  \frac{\| v_0\| + \| w_0\|}{c_0} +   \frac{1+ \sqrt{2T}}{c_0}  ( \| \nabla \psi(x_0) \| + \| \nabla \psi(y_0) \|) + \\
 & \frac{(\sqrt{2}T+ \sqrt{T}) L_{\psi}}{c_0^{\frac{3}{2}}}
\left( \left(  ( \varphi + \psi)(x_0) - \inf_H ( \varphi + \psi)    \right)^{\frac{1}{2}} +   \left(  ( \varphi + \psi)(y_0) - \inf_H ( \varphi + \psi)    \right)^{\frac{1}{2}} \right).
\end{align*}
In particular, if $x_{0}=y_{0}$, $\upsilon_{0}=w_{0}$, then 
\begin{equation}\label{basic-stab-est}
\| \theta \|_{L^{\infty}([0,T])} \leq   \frac{C}{2} \| \lambda-\eta \|_{L^1 ([0,T])} 
\times  \exp \left(  \frac{ \| \dot{\lambda} + \dot{\eta} \|_{L^1 ([0,T])}   }{2c_0} + T(1 + \frac{L_{\psi}}{c_0})  \right),
\end{equation}
with
\begin{align*}
C =  \frac{2 \| v_0\| }{c_0} +   2\frac{1+ \sqrt{2T}}{c_0} \| \nabla \psi(x_0) \|  + 
 2\frac{(\sqrt{2}T+ \sqrt{T}) L_{\psi}}{c_0^{\frac{3}{2}}} \left(( \varphi + \psi)(x_0) - \inf_H ( \varphi + \psi)    \right)^{\frac{1}{2}}. 
\end{align*}
\end{theorem}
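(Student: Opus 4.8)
The plan is to collapse the coupled system for the difference of the two trajectories into a single scalar Gronwall inequality. Write $X=x-y$ and $V=\upsilon-w$, and set $\sigma(t)=\tfrac12(\lambda(t)+\eta(t))$. Rather than differentiate $\theta^2=c_0^2\|X\|^2+\|V\|^2$ directly, I would introduce the weighted vector
\[
p(t)=\sigma(t)\,X(t)+V(t),
\]
and bound $\|p(t)\|$ instead. The first observation is purely algebraic: since $\upsilon\in\partial\varphi(x)$ and $w\in\partial\varphi(y)$, monotonicity of $\partial\varphi$ gives $\inner{X}{V}\ge0$, so that
\[
\|p\|^2=\sigma^2\|X\|^2+2\sigma\inner{X}{V}+\|V\|^2\ \ge\ c_0^2\|X\|^2+\|V\|^2=\theta^2 ,
\]
using $\sigma\ge c_0$. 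Thus $\theta(t)\le\|p(t)\|$, it suffices to estimate $\|p\|$, and by the triangle inequality $\|p(0)\|\le\tfrac{\lambda(0)+\eta(0)}{2}\|x_0-y_0\|+\|\upsilon_0-w_0\|$, which is exactly the initial term appearing in (\ref{basic-stab-0}).

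The key step is the choice of the weight $\sigma$. Subtracting (\ref{6}) from (\ref{7}) and solving each dynamic for $\dot\upsilon,\dot w$ yields $\dot V=-\lambda\dot x+\eta\dot y-V-(\nabla\psi(x)-\nabla\psi(y))$. The purpose of taking $\sigma=\tfrac{\lambda+\eta}{2}$ is the exact cancellation
\[
\sigma\dot X+\dot V=-\tfrac{\lambda-\eta}{2}(\dot x+\dot y)-V-(\nabla\psi(x)-\nabla\psi(y)),
\]
so that
\[
\dot p=\dot\sigma\,X-\tfrac{\lambda-\eta}{2}(\dot x+\dot y)-V-(\nabla\psi(x)-\nabla\psi(y)).
\]
This is precisely what eliminates $\dot X$ from the computation and at the same time turns the forcing term into one proportional to $\lambda-\eta$.

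Next I would compute $\frac{d}{dt}\tfrac12\|p\|^2=\inner{p}{\dot p}$ and estimate the four terms by Cauchy--Schwarz: $|\inner{p}{\dot\sigma X}|\le|\dot\sigma|\,\|p\|\,\|X\|$, the forcing term by $\tfrac{|\lambda-\eta|}{2}\|p\|\,\|\dot x+\dot y\|$, the term $-\inner{p}{V}$ by $\|p\|\,\|V\|$, and the $\nabla\psi$-term by $\|p\|\,L_{\psi}\|X\|$. Invoking the comparison above in the form $\|V\|\le\theta\le\|p\|$ and $c_0\|X\|\le\theta\le\|p\|$, together with Corollary \ref{cr:v.1} (which bounds $\|\dot x+\dot y\|\le\|\dot x\|_{L^\infty}+\|\dot y\|_{L^\infty}\le C$, the constant $C$ being exactly the sum of the two Lipschitz constants there), the four terms collapse to
\[
\frac{d}{dt}\tfrac12\|p\|^2\ \le\ \Big(\tfrac{|\dot\lambda+\dot\eta|}{2c_0}+1+\tfrac{L_{\psi}}{c_0}\Big)\|p\|^2+\tfrac{C}{2}\,|\lambda-\eta|\,\|p\| ,
\]
whence $\frac{d}{dt}\|p\|\le a(t)\|p\|+\tfrac{C}{2}|\lambda-\eta|$ with $a(t)=\tfrac{|\dot\lambda+\dot\eta|}{2c_0}+1+\tfrac{L_{\psi}}{c_0}$. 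Integrating this scalar inequality over $[0,T]$ and using $\theta\le\|p\|$ gives (\ref{basic-stab-0}), since $\int_0^T a=\tfrac{\|\dot\lambda+\dot\eta\|_{L^1}}{2c_0}+T(1+\tfrac{L_{\psi}}{c_0})$; the case $x_0=y_0,\ \upsilon_0=w_0$ follows by dropping the initial term, with $C$ specializing as stated.

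The genuinely delicate point, and the main obstacle, is the weight selection and the ensuing cancellation. A naive differentiation of $\theta^2$ leaves the cross term $\inner{\dot X}{V}$, which has no sign and cannot be controlled: for two distinct trajectories neither $\inner{\dot X}{V}$ nor $\inner{\dot X}{\dot V}$ is monotone, so the single-trajectory trick $\inner{\dot x}{\dot\upsilon}\ge0$ used in Proposition \ref{pr:energy1} is unavailable. The combination $p=\sigma X+V$ is engineered so that $\sigma\dot X$ is annihilated, after which monotonicity enters only through the clean inequality $\theta\le\|p\|$ and everything else is Cauchy--Schwarz plus the Lipschitz bound for $\nabla\psi$. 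The remaining verification — that $t\mapsto\|p(t)\|$ is absolutely continuous, so the a.e.\ differential inequality may be integrated — is routine, since $X,V$ are Lipschitz on $[0,T]$ by Corollary \ref{cr:v.1} and $\sigma$ is absolutely continuous there.
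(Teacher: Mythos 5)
Your proposal is correct and follows essentially the same route as the paper: the same weighted combination $\gamma(x-y)+(\upsilon-w)$ with $\gamma=\frac{\lambda+\eta}{2}$ chosen to cancel the $\dot x,\dot y$ terms, the same monotonicity bound $\theta\le\|p\|$, the same use of the a priori $L^\infty$ bounds on $\dot x,\dot y$, and a Gronwall argument. The only cosmetic difference is that you run Gronwall in differential form on $\|p\|$ via $\frac{d}{dt}\frac12\|p\|^2=\inner{p}{\dot p}$, whereas the paper integrates the identity first and applies the integral form to $\theta$; both yield the identical constants.
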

\begin{proof}
To simplify the exposition, define $\gamma:\left[0,T\right]\longrightarrow\mathbb{R}$
\[
\gamma=\frac{\lambda+\eta}{2}.
\]
Using the assumptions $\lambda, \eta \geq c_0$ and the monotonicity of
$\partial \varphi$  we conclude that for any $t\in\left[0,T\right]$
\[
c_{0}\leq\gamma,\,\left\langle x-y,\upsilon-w\right\rangle \geq0.
\]
Therefore
\begin{equation}
\theta\leq\sqrt{\gamma^{2}\left\Vert x-y\right\Vert ^{2}+\left\Vert \upsilon-w\right\Vert ^{2}}\leq\left\Vert \gamma\left(x-y\right)+\left(\upsilon-w\right)\right\Vert .\label{eq:10}
\end{equation}
Let us show that $\theta$ satisfies a Gronwall inequality. Let us start from
\begin{equation*}
\frac{d}{dt}\left[\gamma\left(x-y\right)+\left(\upsilon-w\right)\right]=\dot{\gamma}\left(x-y\right)+ \gamma\left(\dot{x} - \dot{y}\right)+\left(\dot{\upsilon}-\dot{w}\right).
\end{equation*}
In view of (\ref{6}) and (\ref{7}), 
\[
\left(\dot{\upsilon}-\dot{w}\right)=-\lambda\dot{x}+\eta\dot{y}-\left(\upsilon-w\right)-\left(\nabla \psi\left(x\right)-\nabla \psi\left(y\right)\right).
\]
Combining the two above relations gives
\begin{align*}
\frac{d}{dt}\left[\gamma\left(x-y\right)+\left(\upsilon-w\right)\right]&=   \dot{\gamma}\left(x-y\right) + \gamma\left(\dot{x} - \dot{y}\right) -\lambda\dot{x}+\eta\dot{y}-\left(\upsilon-w\right)-\left(\nabla \psi\left(x\right)-\nabla \psi\left(y\right)\right),\\
   &=  \dot{\gamma}\left(x-y\right) + \frac{\eta - \lambda}{2}\left(\dot{x} + \dot{y}     \right) - \left(\upsilon -w \right)- \left(\nabla \psi\left(x\right)-\nabla \psi\left(y\right)\right).
\end{align*}
Since $\gamma,x,y,w,\upsilon$
are absolutely continuous, the function $\gamma\left(x-y\right)+\left(\upsilon-w\right)$
is also absolutely continuous. As a consequence, integration of the
above inequality on $\left[0,s\right]$, for $s\in\left[0,T\right]$,
yields
\begin{align*}
&\left[\gamma\left(x-y\right)+\left(\upsilon-w\right)\right]\left(s\right)-\left[\gamma\left(x-y\right)+ \left(\upsilon-w\right)\right]\left(0\right)  =\\
 & \frac{1}{2}\int_{0}^{s}\left(\eta - \lambda\right)\left(\dot{x}+ \dot{y}\right)dt + \int_{0}^{s}\left(\dot{\gamma}\left(x-y\right)-\left(\upsilon-w\right)\right)dt
-\int_{0}^{s}\left(\nabla \psi\left(x\right)-\nabla \psi\left(y\right)\right)dt.
\end{align*}
Passing to the norm
\begin{align}\label{stab9}
&\left\Vert \left[\gamma\left(x-y\right)+\left(\upsilon-w\right)\right](s)\right\Vert \nonumber 
\leq  \gamma\left(0\right)\left\Vert x_{0}-y_{0}\right\Vert +\left\Vert \upsilon_{0}-w_{0}\right\Vert \nonumber \\
 &  \quad \quad + \frac{1}{2}\int_{0}^{s}|\lambda-\eta|\left\Vert \dot{x} + \dot{y}\right\Vert dt+\int_{0}^{s} \|\dot{\gamma}\left(x-y\right)-\left(\upsilon-w\right)\|dt
 +\int_{0}^{s}\|\nabla \psi\left(x\right)-\nabla \psi\left(y\right)\|dt.
\end{align}
By  Lipschitz continuity property of $\nabla \psi$, and definition of $\theta$, we have 
\begin{align}\label{stab10}
\|\nabla \psi\left(x(t)\right)-\nabla \psi\left(y(t)\right)\| &\leq L_{\psi} \|x(t)-y(t) \| \nonumber  \\
& \leq \frac{L_{\psi}}{c_0} \theta (t).
\end{align}
On the other hand 
\begin{align}
\left\Vert \dot{\gamma}\left(x-y\right)-\left(\upsilon-w\right)\right\Vert ^{2} & =|\dot{\gamma}| ^{2}\left\Vert x-y\right\Vert ^{2}+\left\Vert \upsilon-w\right\Vert ^{2}+2\left\langle c_{0}\left(x-y\right),\frac{\dot{\gamma}}{c_{0}}\left(w-\upsilon\right)\right\rangle \nonumber \\
 & \leq | \dot{\gamma}| ^{2}\left\Vert x-y\right\Vert ^{2}+\left\Vert \upsilon-w\right\Vert ^{2}+c_{0}^{2}\left\Vert x-y\right\Vert ^{2}+\frac{\left|\dot{\gamma}\right|^{2}}{c_{0}^{2}}\left\Vert \upsilon-w\right\Vert ^{2}\nonumber \\
 & =\left(\frac{\left|\dot{\gamma}\right|^{2}}{c_{0}^{2}}+1\right)\theta^{2}, \label{stab11}
\end{align}
Combining (\ref{stab9}),  (\ref{stab10}),  (\ref{stab11})  and \   $\theta \leq\left\Vert \gamma\left(x-y\right)+\left(\upsilon-w\right)\right\Vert$, we obtain
\begin{align*}
\theta(s) &\leq  \gamma\left(0\right)\left\Vert x_{0}-y_{0}\right\Vert +\left\Vert \upsilon_{0}-w_{0}\right\Vert  +  \frac{1}{2}\int_{0}^{s}|\lambda-\eta|\left\Vert \dot{x} + \dot{y}\right\Vert dt+ \int_{0}^{s} \theta (t) \left[\sqrt{  \frac{\left|\dot{\gamma}\right|^{2}}{c_{0}^{2}}+1} + \frac{L_{\psi}}{c_0} \right] dt  \nonumber  \\
 & \leq  \gamma\left(0\right)\left\Vert x_{0}-y_{0}\right\Vert +\left\Vert \upsilon_{0}-w_{0}\right\Vert  + \frac{1}{2} \| \lambda-\eta \|_{L^1 ([0,T])}  \| \dot{x} + \dot{y}\|_{L^{\infty}([0,T])}
 + \int_{0}^{s} \theta (t) \left[  \frac{\left|\dot{\gamma}\right|}{c_0}+1 + \frac{L_{\psi}}{c_0}  \right]dt.  
\end{align*}
Applying Gronwall's inequality yields
\begin{equation*}
\theta(s) \leq \left[ \gamma\left(0\right)\left\Vert x_{0}-y_{0}\right\Vert +\left\Vert \upsilon_{0}-w_{0}\right\Vert  + \frac{1}{2} \| \lambda-\eta \|_{L^1([0,T])}  \| \dot{x} + \dot{y}\|_{L^{\infty}([0,T])}\right] \times \exp \int_0^s \left[  \frac{\left|\dot{\gamma}\right|}{c_0}+1 + \frac{L_{\psi}}{c_0}  \right]dt.
\end{equation*}
Combining this estimation with the bound on the $L^{\infty}$ norm of $\dot{x}$ and  $\dot{y}$ (see Proposition (\ref{pr:energy30})) gives
\begin{equation*}
\| \theta \|_{L^{\infty}([0,T])} \leq  \left[ \gamma\left(0\right)\left\Vert x_{0}-y_{0}\right\Vert +\left\Vert \upsilon_{0}-w_{0}\right\Vert  + \frac{C}{2} \| \lambda-\eta \|_{L^1([0,T])} \right]
\times  \exp \int_0^T \left[  \frac{\left|\dot{\gamma}\right|}{c_0}+1 + \frac{L_{\psi}}{c_0}  \right]dt
\end{equation*}
with
\begin{align*}
C &=  \frac{\| v_0\| + \| w_0\|}{c_0} +   \frac{1+ \sqrt{2T}}{c_0}  ( \| \nabla \psi(x_0) \| + \| \nabla \psi(y_0) \|) + \\
 & \frac{(\sqrt{2}T+ \sqrt{T}) L_{\psi}}{c_0^{\frac{3}{2}}}
\left( \left(  ( \varphi + \psi)(x_0) - \inf_H ( \varphi + \psi)    \right)^{\frac{1}{2}} +   \left(  ( \varphi + \psi)(y_0) - \inf_H ( \varphi + \psi)    \right)^{\frac{1}{2}} \right).
\end{align*}
Equivalently
\begin{equation*}
\| \theta \|_{L^{\infty}([0,T])} \leq  \left[\frac{\lambda (0)+\eta (0)}{2}\left\Vert x_{0}-y_{0}\right\Vert +\left\Vert \upsilon_{0}-w_{0}\right\Vert  + \frac{C}{2} \| \lambda-\eta \|_{L^1([0,T])} \right]
\times  \exp \left(  \frac{ \| \dot{\lambda} + \dot{\eta} \|_{L^1([0,T])}   }{2c_0} + T(1 + \frac{L_{\psi}}{c_0})  \right)
\end{equation*}
\end{proof}

\begin{remark} { \rm
In the cas $\psi= 0$ we   recover exactly the same stability results as \cite [Theorem 3.1]{ARS}.}
\end{remark}

\begin{remark} { \rm
It is worth noticing that, besides the Lipschitz continuous dependence
with respect to $\lambda$ of the solution $(x,\upsilon)$ of (\ref{basica})\textendash{}(\ref{basicc}),
Theorem (\ref{basic-stab}) also provides its continuous dependence with respect to
the initial data ($x_{0},\upsilon_{0}$).
More precisely, if $\left(x_{n},\upsilon_{n}\right)$ (resp. $\left(x,\upsilon\right)$
) is the solution of (\ref{basica})\textendash{}(\ref{basicc}) corresponding to the Cauchy data $\left(x_{0n},\upsilon_{0n}\right)$(resp.
$\left(x_{0},\upsilon_{0}\right)$), as a direct consequence of (\ref{basic-stab-0})
we obtain for all $T>0$
\[
\left(\partial\varphi\ni\left(x_{0n},\upsilon_{0n}\right)\longrightarrow\left(x_{0},\upsilon_{0}\right)\right)\Longrightarrow\left(x_{n},\upsilon_{n}\right)\longrightarrow\left(x,\upsilon\right)\;\textrm{uniformly on}\:\left[0,T\right].
\]

Note also that, by contrast with semigroup generated by a subdifferential
convex \cite {Br}, there is no regularizing effect on the initial condition:
there is no way to define a solution of (\ref{basica})\textendash{}(\ref{basicc}) for $x_{0}\in\overline{\textrm{dom}\partial\varphi}\setminus\textrm{dom}\partial\varphi$
since in that case, for any approximation sequence $x_{0n}\in\textrm{dom}\partial\varphi$
and $\upsilon_{0n}\in\partial\varphi\left(x_{0n}\right)$, one has
$\underset{n}{\lim}\left\Vert \upsilon_{0n}\right\Vert =+\infty$,
which would imply the blow-up of the sequence $\left(x_{n},\upsilon_{n}\right)$
as $n\longrightarrow+\infty,$ on any finite interval.}
\end{remark}

\begin{remark} { \rm
Another  approach is to study the  equivalent  problem (\ref{basic.z1})-(\ref{basic.z2}), based  on the known stability results for the Cauchy-Lipschitz problem. Although conceptually simple,  this approach seems more technical.}
\end{remark}

\section{Bounded Variation Regularization Coefficient $\lambda\left(\cdot\right)$ }

Let us suppose that $\lambda\left(\cdot\right):\left[0,T\right]\longrightarrow\left]0,\infty\right[$
is of bounded variation on $\left[0,T\right]$, where $T>0$. That
is $\textrm{TV}\left(\lambda,\left[0,T\right]\right)<+\infty$, where
$\textrm{TV}\left(\lambda,\left[0,T\right]\right)$ is the total variation
of $\lambda$ on $\left[0,T\right]:$
\[
\textrm{TV}\left(\lambda,\left[0,T\right]\right)=\sup\sum_{i=1}^{p}\left|\lambda\left(\tau_{i}\right)-\lambda\left(\tau_{i-1}\right)\right|,
\]
the supremum  being taken over all $p\in\mathbb{N}$ and all strictly
increasing sequences $\tau_{0}<\tau_{1}<\cdot\cdot\cdot<\tau_{p}$ of points of $\left[0,T\right]$.
Function $\lambda$ may involve jumps. We also suppose that $\lambda$
is bounded away from $0$:
\[
\inf\lambda\left(\left[0,T\right]\right)>0.
\]
The following lemmas which are proved in \cite[Lemma 3.1, Lemma 3.2]{ARS}
 gather some classical facts concerning the approximation of functions
of bounded variation by smooth functions together with some technical
results useful for sequel. 
\begin{lemma} \label{BV-lemma1}
Let $\lambda:\left[0,T\right]\longrightarrow\left]0,\infty\right[$
be of bounded variation on $\left[0,T\right]$. Then there exists
a sequence $\left(\lambda_{n}\right)_{n\in\mathbb{N}}$, with $\lambda_{n}\in C^{\infty}\left(\left[0,T\right]\right)$
for each $n\in\mathbb{N}$, such that 

\medskip

$i)$ $\inf\lambda(\left[0,T\right])\leq\lambda_{n}\left(t\right)\leq\sup\lambda(\left[0,T\right])$,
$\forall t\in\left[0,T\right]$, $\forall n\in\mathbb{N}$.

In particular, $\lambda_{n}\geq0$ if $\lambda\geq0$;

\medskip

$ii)$ $\lambda_{n}\longrightarrow\lambda$ in $L^{p}\left(0,T\right)$
for any $1\leq p<\infty$;

\medskip

$iii)$ $\textrm{TV}\left(\lambda_{n},\left[0,T\right]\right)=\int_{0}^{T} |\dot{\lambda}_{n}\left(t\right)| dt\leq\textrm{TV}\left(\lambda,\left[0,T\right]\right).$
\end{lemma}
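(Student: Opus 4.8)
The plan is to build the approximating sequence by mollification: after extending $\lambda$ to all of $\mathbb{R}$ in a way that preserves both its range and its total variation, I would convolve it with a smooth approximate identity and restrict back to $[0,T]$. Concretely, fix $\rho\in C_c^\infty(\mathbb{R})$ with $\rho\geq 0$, $\int_{\mathbb{R}}\rho=1$ and $\mathrm{supp}\,\rho\subset[-1,1]$, and set $\rho_n(t)=n\rho(nt)$, so that $(\rho_n)_{n\in\mathbb{N}}$ is a standard approximate identity with $\|\rho_n\|_{L^1(\mathbb{R})}=1$. Extend $\lambda$ to $\tilde\lambda:\mathbb{R}\longrightarrow\,]0,\infty[$ by the constant values $\tilde\lambda\equiv\lambda(0^+)$ on $]-\infty,0[$ and $\tilde\lambda\equiv\lambda(T^-)$ on $]T,+\infty[$. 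Because the added pieces are constant, $\tilde\lambda$ still takes its values in $[\inf\lambda([0,T]),\sup\lambda([0,T])]$ and no new oscillation is created, so $\textrm{TV}(\tilde\lambda,\mathbb{R})=\textrm{TV}(\lambda,[0,T])$. I would then define $\lambda_n:=(\tilde\lambda*\rho_n)|_{[0,T]}$, which belongs to $C^\infty([0,T])$.

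Properties $i)$ and $ii)$ are the soft part. For $i)$, since $\rho_n\geq 0$ integrates to $1$, the value $\lambda_n(t)=\int_{\mathbb{R}}\tilde\lambda(t-s)\rho_n(s)\,ds$ is a convex average of values of $\tilde\lambda$, all lying in $[\inf\lambda([0,T]),\sup\lambda([0,T])]$; hence the same bounds hold for $\lambda_n(t)$, and in particular $\lambda_n\geq 0$ when $\lambda\geq 0$. For $ii)$, I would note that on $[0,T]$ the convolution only involves the values of $\tilde\lambda$ on $[-1,T+1]$ (for $n\geq 1$), where $\tilde\lambda$ is bounded and hence lies in $L^p$; the classical approximate-identity theorem (based on continuity of translation in $L^p$, valid for every $1\leq p<\infty$) then gives $\lambda_n\to\lambda$ in $L^p(0,T)$. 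Neither argument uses continuity of $\lambda$, so the possible jumps of $\lambda$ are harmless.

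The core of the lemma, and the step I expect to demand the most care, is the total-variation estimate $iii)$. Since $\lambda_n\in C^1$, one has the exact identity $\textrm{TV}(\lambda_n,[0,T])=\int_0^T|\dot\lambda_n(t)|\,dt$, so everything reduces to bounding this integral. The mechanism is that the distributional derivative $D\tilde\lambda$ of the $BV$ function $\tilde\lambda$ is a finite Radon measure with $|D\tilde\lambda|(\mathbb{R})=\textrm{TV}(\tilde\lambda,\mathbb{R})=\textrm{TV}(\lambda,[0,T])$, that differentiation commutes with convolution in the sense $\dot\lambda_n=D\tilde\lambda*\rho_n$, and that a Young-type inequality for the convolution of a finite measure with an $L^1$ kernel gives
\[
\int_0^T|\dot\lambda_n|\,dt\;\leq\;\int_{\mathbb{R}}\bigl|(D\tilde\lambda*\rho_n)(t)\bigr|\,dt\;\leq\;|D\tilde\lambda|(\mathbb{R})\,\|\rho_n\|_{L^1(\mathbb{R})}\;=\;\textrm{TV}(\lambda,[0,T]).
\]
The two delicate points are the justification of the identity $\frac{d}{dt}(\tilde\lambda*\rho_n)=D\tilde\lambda*\rho_n$ (an integration by parts against the smooth kernel, legitimate because $\rho_n$ is compactly supported and smooth) and the interchange of the total mass of the measure with the convolution via Fubini. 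Once these are secured, taking the sequence $(\lambda_n)_{n\in\mathbb{N}}$ constructed above establishes all three claims simultaneously.
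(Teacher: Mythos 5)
The paper does not actually prove this lemma: it is quoted from \cite[Lemmas 3.1 and 3.2]{ARS}, where the construction is exactly the mollification you describe (extend, convolve with an approximate identity, use $\frac{d}{dt}(\tilde\lambda*\rho_n)=D\tilde\lambda*\rho_n$ and Young's inequality for the measure--function convolution), so your proposal is correct and takes essentially the same route. One small imprecision worth fixing: if $\lambda(0)\neq\lambda(0^{+})$ or $\lambda(T)\neq\lambda(T^{-})$, the extension by the one-sided limits has pointwise variation $\textrm{TV}(\tilde\lambda,\mathbb{R})=\textrm{TV}(\lambda,[0,T])+|\lambda(0)-\lambda(0^{+})|+|\lambda(T)-\lambda(T^{-})|$, so your displayed chain of equalities is not literally true; this is harmless because the quantity your estimate actually uses is the total mass $|D\tilde\lambda|(\mathbb{R})$ (the essential variation, blind to the values at the two endpoints), which is still bounded by $\textrm{TV}(\lambda,[0,T])$ --- or one can simply extend by the constants $\lambda(0)$ and $\lambda(T)$ instead.
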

$\vphantom{}$
\begin{lemma}\label{BV-lemma2}
Let $z_{n},z\in C\left(\left[0,T\right],H\right)$ be such that $z_{n}\longrightarrow z$
uniformly and $\left(z_{n}\right)_{n}$ is L-Lipschitz continuous
for some positive constant $L$ independent of $n\in\mathbb{N}$.
Let $\lambda_{n}\longrightarrow\lambda$ in $L^{2}\left(0,T\right).$
Then $\lambda_{n}\dot{z}_{n}$ converges weakly to $\lambda\dot{z}$
in $L^{2}\left(0,T;H\right).$ 
\end{lemma}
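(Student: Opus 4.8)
The plan is to prove weak convergence by first securing a uniform $L^2$ bound on the products $\lambda_n\dot z_n$, and then identifying the limit by testing against a dense class of functions. The technical subtlety, explained below, is that $\lambda_n\dot z_n$ is a product of a strongly convergent factor and a merely weakly convergent factor.

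First I would record the elementary consequences of the hypotheses. Since each $z_n$ is $L$-Lipschitz, it is absolutely continuous, differentiable almost everywhere, and $\|\dot z_n(t)\|\le L$ for a.e.\ $t$; thus $\dot z_n\in L^\infty(0,T;H)\subset L^2(0,T;H)$ with $\|\dot z_n\|_{L^2}\le L\sqrt T$. The uniform limit $z$ inherits the $L$-Lipschitz property, so likewise $\dot z\in L^\infty(0,T;H)$. Because $\lambda_n\to\lambda$ in $L^2(0,T)$ the sequence $(\lambda_n)$ is bounded in $L^2$, whence
\[
\|\lambda_n\dot z_n\|_{L^2(0,T;H)}\le L\,\|\lambda_n\|_{L^2(0,T)}\le M<\infty .
\]
So $(\lambda_n\dot z_n)$ is bounded in the Hilbert space $L^2(0,T;H)$, and it suffices to verify the convergence of the pairings $\int_0^T\langle\lambda_n\dot z_n,\xi\rangle\,dt\to\int_0^T\langle\lambda\dot z,\xi\rangle\,dt$ for $\xi$ in the dense subspace $C_c^\infty((0,T);H)$; the uniform bound then upgrades this, by a standard three-$\varepsilon$ density argument, to weak convergence tested against all of $L^2(0,T;H)$.

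Next I would establish the auxiliary fact $\dot z_n\rightharpoonup\dot z$ weakly in $L^2(0,T;H)$. This follows from the $L^2$-bound together with integration by parts: for $\xi\in C_c^\infty((0,T);H)$ the boundary terms vanish, and using $z_n\to z$ uniformly (hence in $L^2$),
\[
\int_0^T\langle\dot z_n,\xi\rangle\,dt=-\int_0^T\langle z_n,\dot\xi\rangle\,dt\longrightarrow-\int_0^T\langle z,\dot\xi\rangle\,dt=\int_0^T\langle\dot z,\xi\rangle\,dt .
\]

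The key step, and the main obstacle, is that $\lambda_n\dot z_n$ couples a sequence converging only strongly in $L^2$ (namely $\lambda_n$) with a sequence converging only weakly in $L^2$ (namely $\dot z_n$), and one cannot in general multiply a weak and a strong limit inside an integral. The resolution is to transfer the scalar factor $\lambda_n$ off $\dot z_n$ and onto the test function: for $\xi\in C_c^\infty((0,T);H)$,
\[
\int_0^T\langle\lambda_n\dot z_n,\xi\rangle\,dt=\int_0^T\langle\dot z_n,\lambda_n\xi\rangle\,dt .
\]
Since $\xi$ is bounded and $\lambda_n\to\lambda$ in $L^2(0,T)$, one checks at once that $\lambda_n\xi\to\lambda\xi$ \emph{strongly} in $L^2(0,T;H)$. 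Now the right-hand integrand pairs the weakly convergent $\dot z_n$ against the strongly convergent $\lambda_n\xi$, and the elementary weak--strong lemma (if $a_n\rightharpoonup a$ and $b_n\to b$ in a Hilbert space, then $\langle a_n,b_n\rangle\to\langle a,b\rangle$) gives
\[
\int_0^T\langle\dot z_n,\lambda_n\xi\rangle\,dt\longrightarrow\int_0^T\langle\dot z,\lambda\xi\rangle\,dt=\int_0^T\langle\lambda\dot z,\xi\rangle\,dt .
\]
Combining this convergence with the uniform $L^2$ bound and the density of $C_c^\infty((0,T);H)$ completes the proof that $\lambda_n\dot z_n\rightharpoonup\lambda\dot z$ in $L^2(0,T;H)$.
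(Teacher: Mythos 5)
Your proof is correct. The paper does not actually reproduce a proof of this lemma (it is quoted from \cite[Lemma 3.2]{ARS}), but your argument is the standard one used there: a uniform $L^{2}$ bound on $\lambda_{n}\dot z_{n}$ coming from $\|\dot z_{n}\|_{\infty}\le L$, identification of $\dot z_{n}\rightharpoonup \dot z$ by integration by parts against test functions, and a weak--strong pairing to pass to the limit in the product, with density closing the argument. The only cosmetic difference is that you transfer the scalar $\lambda_{n}$ onto the test function, whereas the reference's route is the additive splitting $\lambda_{n}\dot z_{n}-\lambda\dot z=(\lambda_{n}-\lambda)\dot z_{n}+\lambda(\dot z_{n}-\dot z)$, in which the first term in fact converges to $0$ strongly in $L^{2}(0,T;H)$ because $\|(\lambda_{n}-\lambda)\dot z_{n}\|_{L^{2}}\le L\|\lambda_{n}-\lambda\|_{L^{2}}$; both devices resolve the weak--times--strong difficulty in the same way.
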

$\vphantom{}$

We can now state the main result of this section.

\begin{theorem} \label{BV-Theorem}
Let $\lambda:\left[0,T\right]\longrightarrow\left]0,\infty\right[$
be of bounded variation on $\left[0,T\right]$, and suppose $c_{0}=\inf\lambda\left(\left[0,T\right]\right)>0$.
Let  $ x_{0} \in \mbox{dom} \partial \varphi$ and $\upsilon_{0} \in \partial  \varphi (x_{0})$, \ $\upsilon_{0}\neq 0$. Then there is existence and uniqueness of a strong solution $\left(x,\upsilon\right):\left[0,T\right]\longrightarrow H\times H$
 of the Cauchy problem 
\begin{subequations}
\begin{align}
 & \upsilon\left(t\right)\in \partial \varphi \left(x\left(t\right)\right),\qquad0\leq t\leq T\label{15}\\
 & \lambda\left(t\right)\dot{x}\left(t\right)+\dot{\upsilon}\left(t\right)+\upsilon\left(t\right)+ \nabla  \psi\left(x\left(t\right)\right)=0,\quad a.e. \  \:0\leq t\leq T,\label{16}\\
 & x\left(0\right)=x_{0},\quad\upsilon\left(0\right)=\upsilon_{0}.\label{17}
\end{align}
\end{subequations}\end{theorem}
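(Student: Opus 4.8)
The plan is to reduce the bounded-variation case to the absolutely continuous case treated in Theorem~\ref{basic-stab} by smoothing $\lambda$ and passing to the limit, the stability estimate itself furnishing the compactness. First I would apply Lemma~\ref{BV-lemma1} to obtain $\lambda_n\in C^\infty([0,T])$ with $c_0=\inf\lambda([0,T])\le\lambda_n\le\sup\lambda([0,T])$, $\lambda_n\to\lambda$ in every $L^p(0,T)$ ($1\le p<\infty$), and $\mathrm{TV}(\lambda_n,[0,T])=\int_0^T|\dot\lambda_n|\,dt\le\mathrm{TV}(\lambda,[0,T])$. Each $\lambda_n$ being continuous, positive and absolutely continuous with $\lambda_n\ge c_0$, the results recalled from \cite{AAS} provide, for the common Cauchy data $(x_0,v_0)$, a unique strong solution $(x_n,v_n)$ of \eqref{15}--\eqref{17} with $\lambda$ replaced by $\lambda_n$. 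The key point is that every estimate of Section~2 (Propositions~\ref{pr:energy1}, \ref{pr:energy20}, \ref{pr:energy30}, Corollary~\ref{cr:v.1}) uses only the lower bound $\lambda_n\ge c_0$ and never the size of $\dot\lambda_n$; hence $(x_n),(v_n)$ and $(\dot x_n),(\dot v_n)$ are bounded in $L^\infty(0,T;H)$ and the sequences are equi-Lipschitz, with constants depending only on $c_0,T,L_\psi$ and the common data, hence uniform in $n$.

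Next I would prove that $(x_n,v_n)$ is Cauchy in $C([0,T];H\times H)$ by feeding $\lambda_n,\lambda_m$ into Theorem~\ref{basic-stab} with identical initial data. The prefactor $C$ there depends only on the common data and on $c_0,T,L_\psi$, hence is bounded uniformly in $n,m$, while the exponent is controlled since $\frac{\|\dot\lambda_n+\dot\lambda_m\|_{L^1}}{2c_0}\le\frac{\mathrm{TV}(\lambda)}{c_0}$ by Lemma~\ref{BV-lemma1}(iii). As $\lambda_n\to\lambda$ in $L^1$, the factor $\|\lambda_n-\lambda_m\|_{L^1}\to0$, so $\|\theta_{nm}\|_{L^\infty}\to0$; since $\theta_{nm}^2=c_0^2\|x_n-x_m\|^2+\|v_n-v_m\|^2$, both $(x_n)$ and $(v_n)$ converge uniformly to limits $x,v$, which are Lipschitz (uniform limits of equi-Lipschitz maps), satisfy $x(0)=x_0$, $v(0)=v_0$, and obey $\dot x_n\rightharpoonup\dot x$, $\dot v_n\rightharpoonup\dot v$ weakly in $L^2(0,T;H)$ along a subsequence. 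I would then pass to the limit in $\lambda_n\dot x_n+\dot v_n+v_n+\nabla\psi(x_n)=0$: the only delicate term satisfies $\lambda_n\dot x_n\rightharpoonup\lambda\dot x$ weakly in $L^2$ by Lemma~\ref{BV-lemma2} (using $x_n\to x$ uniformly, $(x_n)$ uniformly Lipschitz, $\lambda_n\to\lambda$ in $L^2$), while $\dot v_n\rightharpoonup\dot v$, $v_n\to v$ in $L^2$, and $\nabla\psi(x_n)\to\nabla\psi(x)$ strongly, giving \eqref{16} a.e. Finally $v_n(t)\in\partial\varphi(x_n(t))$ for all $t$, together with the strong convergences $x_n(t)\to x(t)$, $v_n(t)\to v(t)$ and the closedness of the graph of the maximal monotone operator $\partial\varphi$, gives \eqref{15}, so $(x,v)$ is a strong solution.

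For uniqueness, let $(x,v),(y,w)$ be two strong solutions, set $X=x-y$, $V=v-w$ and $\theta=\sqrt{c_0^2\|X\|^2+\|V\|^2}$. Both are Lipschitz (Corollary~\ref{cr:v.1} needs only $\lambda\ge c_0$), so with $\lambda$ of bounded variation the product $\Phi=\lambda X+V$ is of bounded variation and, using \eqref{16} for both solutions, $d\Phi=X\,d\lambda+(\lambda\dot X+\dot V)\,dt=X\,d\lambda-(V+\nabla\psi(x)-\nabla\psi(y))\,dt$. Monotonicity gives $\langle X,V\rangle\ge0$, hence $\theta\le\|\Phi\|$; integrating from $0$ with $\Phi(0)=0$, passing to norms and using $\|\nabla\psi(x)-\nabla\psi(y)\|\le L_\psi\|X\|\le\frac{L_\psi}{c_0}\theta$ yields a Stieltjes--Gronwall inequality of the form $\theta(s)\le\frac{1}{c_0}\int_{[0,s)}\theta\,d|\lambda|+(1+\frac{L_\psi}{c_0})\int_0^s\theta\,dt$, whence $\theta\equiv0$ and $(x,v)=(y,w)$.

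I expect this last step to be the main obstacle: the Gronwall argument must be made rigorous when $\lambda$ carries atoms (jumps), because the naive inequality over the closed interval $[0,s]$ is self-referential at an atom and may fail to force $\theta\equiv0$ once a jump exceeds $c_0$. The remedy is to use the left-continuous representative of $\lambda$, so that the atom at the current time is excluded and the estimate runs over the half-open interval $[0,s)$; the generalized Gronwall lemma in the Stieltjes-measure form used in \cite{ARS} then applies and gives $\theta\equiv0$. A subsidiary point one must not overlook is that the smoothing of Lemma~\ref{BV-lemma1} preserves $\lambda_n\ge c_0$, which is exactly what makes the Section~2 bounds and the stability constant uniform in $n$; the passage to the limit and the recovery of the inclusion, by contrast, are routine once the uniform estimates and Lemmas~\ref{BV-lemma1}--\ref{BV-lemma2} are in hand.
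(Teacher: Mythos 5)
Your proposal follows essentially the same route as the paper: smoothing $\lambda$ via Lemma~\ref{BV-lemma1}, using the stability estimate of Theorem~\ref{basic-stab} with identical Cauchy data to obtain a uniform Cauchy sequence, passing to the limit with Lemma~\ref{BV-lemma2} and the closedness of the graph of $\partial\varphi$, and proving uniqueness through the differential-measure calculus with the left-continuous representative $\lambda^{-}$ and a Stieltjes--Gronwall inequality over half-open intervals. The only difference is cosmetic: where you invoke a generalized Gronwall lemma from \cite{ARS}, the paper spells out the contradiction argument (the infimum $t_{0}$ of $\{\theta>0\}$, the choice of $t_{1}$ with $\int_{]t_{0},t_{1}]}d\mu<1/2$, and the bound $M$), but the substance is identical.
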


\begin{proof} \
\emph{Existence: }According to Lemma \ref{BV-lemma1}, there exists a sequence of functions 
$\left(\lambda_{n}\right)_{n\in\mathbb{N}}$ \  in $C^{\infty}\left(\left[0,T\right]\right)$, with $\lambda_{n}(t) \geq c_{0}$,
which converges to $\lambda$ in $L^{p}\left(0,T\right)$
for $p\geq1$, and 
satisfies the condition 
\begin{equation}
\int_{0}^{T}\left|\dot{\lambda}_{n}\left(t\right)\right|dt\leq\textrm{TV}\left(\lambda\right).\label{18}
\end{equation}
For each $n \in \mathbb N$ there exists a unique $\left(x_{n},\upsilon_{n}\right)$
solution of the differential inclusion
\begin{subequations}
\begin{align}
 & \upsilon_{n}\left(t\right)\in \partial \varphi\left(x_{n}\left(t\right)\right),\quad0\leq t\leq T,\label{19}\\
 & \lambda_{n}\left(t\right)\dot{x}_{n}\left(t\right)+\dot{\upsilon_{n}}\left(t\right)+\upsilon_{n}\left(t\right)+\nabla  \psi \left(x_{n}\left(t\right)\right)=0,\quad a.e.  \ \:0\leq t\leq T,\label{20}\\
 & x_{n}\left(0\right)=x_{0},\quad\upsilon_{n}\left(0\right)=\upsilon_{0}.\label{21}
\end{align}
\end{subequations}
We will show that $\left(x_{n},\upsilon_{n}\right)$ converges uniformly
to a solution of (\ref{15}-\ref{17}).

\noindent Consider, in $C\left(\left[0,T\right],H\times H\right)$, the norm
\[
\left\Vert \left(z,w\right)\right\Vert _{c_{0}}=\underset{t\in\left[0,T\right]}{\max}\sqrt{c_{0}^{2}\left\Vert z\left(t\right)\right\Vert ^{2}+\left\Vert w\left(t\right)\right\Vert ^{2}.}
\]
It is equivalent to the $sup$ norm in $C\left(\left[0,T\right],H\times H\right).$
Using Theorem \ref{basic-stab},  (\ref{basic-stab-est}) and (\ref{18}) we deduce the existence of a constant $C$ (which is independant of $n$) such that
for any $n,m$  we have 
\begin{align*}
\left\Vert \left(x_{n},\upsilon_{n}\right)-\left(x_{m},\upsilon_{m}\right)\right\Vert_{c_{0}}  & \leq C \exp\left(\frac{\left\Vert \dot{\lambda}_{n}+\dot{\lambda}_{m}\right\Vert _{L^{1}\left(\left[0,T\right]\right)}}{2c_{0}}+ T(1 + \frac{L_{\psi}}{c_0}) \right)\left\Vert \lambda_{n}-\lambda_{m}\right\Vert _{L^{1}\left(\left[0,T\right]\right)}\\
 & \leq C\exp\left(\frac{TV\left(\lambda\right)}{c_{0}}+T(1 + \frac{L_{\psi}}{c_0})  \right)\left\Vert \lambda_{n}-\lambda_{m}\right\Vert _{L^{1}\left(\left[0,T\right]\right)}.
\end{align*}
Since  the sequence 
$\left(\lambda_{n}\right)_{n\in\mathbb{N}}$
 converges to $\lambda$ in $L^{1}\left(0,T\right)$, we deduce that
$\left(x_{n},\upsilon_{n}\right)$ is a Cauchy sequence with
respect to the $sup$ norm. Therefore, $\left(x_{n},\upsilon_{n}\right)$
converges uniformly to some continuous $\left(x,\upsilon\right):\left[0,T\right]\longrightarrow H\times H$.
Since $\nabla  \psi$ is continuous we also obtain 
$$
\nabla  \psi (x_{n}) \rightarrow \nabla  \psi (x)   \quad \mbox{uniformly on  }  \  [0,T].
$$
Moreover, Lemma \ref{BV-lemma2} ensures that $\lambda_{n}\dot{x}_{n}$ and $\dot{v}_{n}$ converge weakly to $\lambda\dot{x}$ and $\dot{v}$ in $L^{2}\left(0,T;\ H\right)$.
Hence letting $n\rightarrow +\infty$ in (\ref{20}) gives 
$$
\lambda\dot{x}+\dot{\upsilon}+\upsilon+ \nabla  \psi\left(x\right)=0,
$$
that's (\ref{16}). Finally, since the graph of $\partial \varphi$ is closed we obtain, for all $t \in [0,T]$, 
$\upsilon\left(t\right)\in \partial \varphi \left(x\left(t\right)\right)$
that's (\ref{15}).
Hence $\left(x,\upsilon\right)$  is a solution of (\ref{15})-(\ref{17}).

\medskip

\emph{Uniqueness: } We adapt the proof of \cite{ARS}, using differential
and integral calculus for BV functions which involves differential
measures. 

Define $\lambda^{-}:\left[0,T\right]\longmapsto\left[c_{0},+\infty\right[$
by 
\begin{align*}
\lambda^{-}\left(0\right) & =\lambda\left(0\right),\\
 & 0<t\leq T:\lambda^{-}\left(t\right)=\underset{\varepsilon>0,\varepsilon\rightarrow0}{\lim\lambda\left(t-\varepsilon\right).}
\end{align*}
Let $\left(x,\upsilon\right),\left(y,w\right):\left[0,T\right]\rightarrow H\times H$
be two strong  solutions of (\ref{15})-(\ref{16})-(\ref{17}). Explicitly 
\begin{align*}
 & \lambda\dot{x}+\dot{\upsilon}+\upsilon+\nabla\psi\left(x\right)=0\: \ a.e.;\;\upsilon\left(t\right)\in\partial\varphi\left(x\left(t\right)\right)\;\forall t;\; x\left(0\right)=x_{0},\;\upsilon\left(0\right)=\upsilon_{0},\\
 & \lambda\dot{y}+\dot{w}+w+\nabla\psi\left(y\right)=0\: \ a.e.;\; w\left(t\right)\in\partial\varphi\left(y\left(t\right)\right)\;\forall t;\; y\left(0\right)=x_{0},\; w\left(0\right)=\upsilon_{0}.
\end{align*}
Since $\lambda=\lambda^{-}$ a.e., we also have 
\[
\lambda^{-}\dot{x}+\dot{\upsilon}+\upsilon+\nabla\psi\left(x\right)=0\;\textrm{and}\;\lambda^{-}\dot{y}+\dot{w}+w+\nabla\psi\left(y\right)=0\; \: a.e.
\]
and consequently 
\begin{equation}
\lambda^{-}\left(\dot{x}-\dot{y}\right)+\left(\dot{\upsilon}-\dot{w}\right)+\upsilon-w+\nabla\psi\left(x\right)-\nabla\psi\left(y\right)=0\; \: a.e.\label{48}
\end{equation}
In terms of differential measures on $\left[0,T\right]$ we have  \cite[Proposition 11.1]{Moreau}
\begin{equation}
d\left[\lambda^{-}\left(x-y\right)+\left(\upsilon-w\right)\right]=\lambda^{-}d\left(x-y\right)+\left(x-y\right)d\lambda^{-}+d\left(\upsilon-w\right).\label{49}
\end{equation}
Integrating the left hand term on $\left[0,s\right[$ and taking the
initial condition into account, we obtain for $s\in\left[0,T\right]$  \cite[Corollary 8.2]{Moreau}
\begin{equation}
\int_{\left[0,s\right[}d\left[\lambda^{-}\left(x-y\right)+\left(\upsilon-w\right)\right]=\lambda^{-}\left(s\right)\left(x\left(s\right)-y\left(s\right)\right)+\left(\upsilon\left(s\right)-w\left(s\right)\right).\label{50}
\end{equation}
Now integrating the right hand term of (\ref{49}) on $\left[0,s\right)$
and taking (\ref{48}) into account, we get 
\begin{align}
\int_{\left[0,s\right[}\left[\lambda^{-}d\left(x-y\right)+\left(x-y\right)d\lambda^{-}+d\left(\upsilon-w\right)\right]\nonumber \\
 & =\int_{\left[0,s\right[}\left[\lambda^{-}d\left(x-y\right)+d\left(\upsilon-w\right)\right]+\int_{\left[0,s\right[}\left(x-y\right)d\lambda^{-}\nonumber \\
 & =\int_{\left[0,s\right[}\left[\lambda^{-}\left(\dot{x}-\dot{y}\right)+\dot{\upsilon}-\dot{w}\right]dt+\int_{\left[0,s\right[}\left(x-y\right)d\lambda^{-}\nonumber \\
 & =-\int_{\left[0,s\right[}\left[\upsilon-w+\nabla\psi\left(x\right)-\nabla\psi\left(y\right)\right]dt+\int_{\left[0,s\right[}\left(x-y\right)d\lambda^{-}.\label{51}
\end{align}
From (\ref{49},\ref{50},\ref{51}) we deduce 
\[
\lambda^{-}\left(s\right)\left(x\left(s\right)-y\left(s\right)\right)+\left(\upsilon\left(s\right)-w\left(s\right)\right)=\int_{\left[0,s\right[}\left(x-y\right)d\lambda^{-}-\int_{\left[0,s\right[}\left[\upsilon-w+\nabla\psi\left(x\right)-\nabla\psi\left(y\right)\right]dt.
\]
Whence
\begin{equation}
\left\Vert \lambda^{-}\left(s\right)\left(x\left(s\right)-y\left(s\right)\right)+\left(\upsilon\left(s\right)-w\left(s\right)\right)\right\Vert \leq\int_{\left[0,s\right[}\left\Vert x-y\right\Vert \left|d\lambda^{-}\right|+\int_{\left[0,s\right[}\left\Vert \upsilon-w\right\Vert dt+\int_{\left[0,s\right[}\left\Vert \nabla\psi\left(x\right)-\nabla\psi\left(y\right)\right\Vert dt\label{52}
\end{equation}
Define $\theta\left(s\right)=\left(c_{0}^{2}\left\Vert x\left(s\right)-y\left(s\right)\right\Vert ^{2}+\left\Vert \upsilon\left(s\right)-w\left(s\right)\right\Vert ^{2}\right)^{1/2}$.
The same reasoning as in Theorem \ref{basic-stab} yields 
\[
\theta\left(s\right)\leq\left\Vert \lambda^{-}\left(s\right)\left(x\left(s\right)-y\left(s\right)\right)+\left(\upsilon\left(s\right)-w\left(s\right)\right)\right\Vert ,\:\forall s\in\left[0,T\right].
\]
$\vphantom{}$
Besides we also have $c_{0}\left\Vert x\left(s\right)-y\left(s\right)\right\Vert \leq\theta\left(s\right)$,   $\left\Vert \upsilon\left(s\right)-w\left(s\right)\right\Vert \leq\theta\left(s\right)$, and we have that $\nabla\psi$ is the gradient of a convex, continuously
differentiable function $\psi:H\longrightarrow\mathbb{R}$, and by  Lipschitz continuity property of $\nabla \psi$, and definition of $\theta$, we have 
\begin{align}\label{stab100}
\|\nabla \psi\left(x(t)\right)-\nabla \psi\left(y(t)\right)\| &\leq L_{\psi} \|x(t)-y(t) \| \nonumber  \\
& \leq \frac{L_{\psi}}{c_0} \theta (t).
\end{align}
Hence, with (\ref{52})
\begin{equation}
\theta\left(s\right)\leq\frac{1}{c_{0}}\int_{\left[0,s\right[}\theta\left|d\lambda^{-}\right|+\int_{\left[0,s\right[}\theta dt+\frac{1}{c_{0}}L_{\psi}\int_{\left[0,s\right[}\theta dt=\int_{\left[0,s\right[}\theta d\mu,\label{53}
\end{equation}
where $d\mu$ denotes the nonnegative measure $\frac{1}{c_{0}}\left|d\lambda^{-}\right|+\left(1+\frac{1}{c_{0}}L_{\psi}\right)dt$.

$\vphantom{}$

If $\theta\not\equiv0$ on $\left[0,T\right],$ define $t_{0}=\inf\left\{ t\in\left[0,T\right],\theta\left(t\right)>0\right\} $.
Note $t_{0}<T$ and $\theta\left(t_{0}\right)=0$, since $\theta$
is continuous. With (\ref{53}) we then have 
\begin{equation}
\theta\left(s\right)\leq\int_{\left]t_{0},s\right[}\theta d\mu,\; t_{0}<s\leq T.\label{54}
\end{equation}
In view of $\int_{\left]t_{0},t_{0}\right]} d\mu=0$ and of
the right continuity at $t_{0}$ of $t\longrightarrow\int_{\left]t_{0},t\right]} d\mu,$ \cite[Proposition 9.1 ]{Moreau}
there exists some $t_{1}\in\left]t_{0},T\right]$ such that $\int_{\left]t_{0},t_{1}\right]} d\mu<1/2.$
Let $M$ be an upper bound of $\theta$ on $\left[0,t_{1}\right]$;
from (\ref{54}) we deduce, for $s\in\left]t_{0},t_{1}\right]$
\[
\theta\left(s\right)\leq M\int_{\left]t_{0},s\right[} d\mu\leq M\int_{\left]t_{0},t_{1}\right[} d\mu\leq\frac{M}{2}.
\]
Hence $M/2$ is also an upper bound of $\theta$ on $\left[0,t_{1}\right],$
which necessarily entails $M=0$ and $\theta\equiv0$ on $\left[0,t_{1}\right]$.
But this is contradiction with the  definition of $t_{0}.$ Hence
$\theta\equiv0$ and $\left(x,\upsilon\right)\equiv\left(y,w\right)$
on $\left[0,T\right]$.
\end{proof}
Theorem 2.2 has a natural global version formulation:
\begin{corollary} \label{BV-corollary}
Suppose that $\lambda:\left[0,\infty\right[\rightarrow\left]0,\infty\right[$
is of bounded variation on $\left[0,T\right]$ and $\inf\lambda\left(\left[0,T\right]\right)>0$
for any $T<\infty.$ Let  $\upsilon_{0}\in\partial\varphi\left(x_{0}\right)$ and
$\upsilon_{0}\neq0$.  Then there is existence and uniqueness of a strong solution $\left(x,\upsilon\right):\left[0,\infty\right[\longrightarrow H\times H$
 of the Cauchy problem 
\begin{equation}
\lambda\dot{x}+\dot{\upsilon}+\upsilon+\nabla\psi\left(x\right)=0, \:\;\upsilon\left(t\right)\in \partial\varphi\left(x\left(t\right)\right)\;\; x\left(0\right)=x_{0},\;\upsilon\left(0\right)=\upsilon_{0}\label{29}
\end{equation}
where the first equality holds for almost all $t\in\left[0,\infty\right[,$
and the inclusion holds for all $t\in\left[0,\infty\right[.$
\end{corollary}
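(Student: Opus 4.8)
The plan is to globalize Theorem~\ref{BV-Theorem} by patching together the solutions it provides on bounded intervals, the gluing being legitimate precisely because of the uniqueness already established there. First I would fix an arbitrary $T<\infty$ and check that the hypotheses of Theorem~\ref{BV-Theorem} hold on $[0,T]$: the restriction of $\lambda$ to $[0,T]$ is of bounded variation (a restriction of a BV function to a subinterval is BV), and $\inf\lambda([0,T])>0$ by assumption, while $\upsilon_{0}\in\partial\varphi(x_{0})$ with $\upsilon_{0}\neq0$. Hence for each $T$ there is a unique strong solution $(x^{T},\upsilon^{T}):[0,T]\to H\times H$ of (\ref{29}) on $[0,T]$.

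Next I would establish consistency of this family. Take $0<T_{1}<T_{2}$. The restriction of $(x^{T_{2}},\upsilon^{T_{2}})$ to $[0,T_{1}]$ inherits all the defining properties of a strong solution: $x^{T_{2}},\upsilon^{T_{2}}$ remain absolutely continuous on $[0,T_{1}]$, the differential equation in (\ref{29}) still holds for almost every $t\in[0,T_{1}]$, the inclusion $\upsilon^{T_{2}}(t)\in\partial\varphi(x^{T_{2}}(t))$ still holds for all $t\in[0,T_{1}]$, and the Cauchy data at $t=0$ are unchanged. Thus this restriction is a strong solution of the same problem on $[0,T_{1}]$, and the uniqueness part of Theorem~\ref{BV-Theorem} forces
\[
(x^{T_{2}},\upsilon^{T_{2}})\big|_{[0,T_{1}]}=(x^{T_{1}},\upsilon^{T_{1}}).
\]
This compatibility lets me define unambiguously $(x,\upsilon):\left[0,\infty\right[\to H\times H$ by $(x(t),\upsilon(t))=(x^{T}(t),\upsilon^{T}(t))$ for any $T>t$.

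Then I would verify that $(x,\upsilon)$ is a strong solution of (\ref{29}) on $\left[0,\infty\right[$, in the sense that it is a strong solution on every bounded interval. On each $[0,T]$ it coincides with $(x^{T},\upsilon^{T})$, so it is absolutely continuous there, the equation holds a.e. on $[0,T]$, and the inclusion holds for all $t\in[0,T]$; since $\left[0,\infty\right[=\bigcup_{n}[0,n]$ and a countable union of null sets is null, the equation holds for almost all $t\in\left[0,\infty\right[$ and the inclusion for all $t\in\left[0,\infty\right[$, with $x(0)=x_{0}$, $\upsilon(0)=\upsilon_{0}$. Uniqueness is then immediate: any two global strong solutions restrict, on each $[0,T]$, to strong solutions of the problem on $[0,T]$, hence coincide there by Theorem~\ref{BV-Theorem}; as $T$ is arbitrary they coincide on all of $\left[0,\infty\right[$.

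The only genuinely delicate point is the consistency step, and even this reduces to checking that the notion of strong solution used in Theorem~\ref{BV-Theorem} is stable under restriction to subintervals---which it is, since absolute continuity, the a.e.\ validity of the equation, and the pointwise validity of the inclusion are all local properties. Everything else is bookkeeping, and no new estimate beyond those of Theorem~\ref{BV-Theorem} is required.
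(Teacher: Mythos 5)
Your proposal is correct and is exactly the standard globalization-by-uniqueness argument that the paper has in mind (the paper in fact states Corollary \ref{BV-corollary} without proof, as a ``natural global version'' of Theorem \ref{BV-Theorem}). The three steps you isolate --- applying the theorem on each $[0,T]$, using uniqueness to make the family consistent, and noting that absolute continuity, the a.e.\ equation and the pointwise inclusion are all local so the glued pair is a global strong solution --- are precisely the intended bookkeeping, and no further estimate is needed.
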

\begin{remark} { \rm
The results obtained in this paper still hold if $B=\nabla\psi$ the
gradient of a convex, continuously differentiable function $\psi:H\longrightarrow\mathbb{R}$
is replaced by a maximal monotone cocoercive operator $B:H\longrightarrow H$
(see \cite{AA} ).}
\end{remark}

\section{Perspective}
Let us list some interesting questions to be examined in the future:
\begin{enumerate}
	\item Study the asymptotic stability properties, corresponding to the case where $T=+\infty$, in connection with the convergence results of \cite{AAS} and \cite{AS}. 
	\item  Extend the results to the case where the Levenberg-Marquart regularization term is given in a closed-loop form, $\lambda(t)= \alpha (\|\dot{x} (t)\|)$  as in
\cite{ARS}.
\item Study the same questions for the corresponding forward-backward algorithms, see \cite{AA}.
	
\end{enumerate}

\end{document}